\theoremstyle{plain}
\newtheorem{theorem}{Theorem}[section]
\newtheorem{proposition}{Proposition}[section]
\newtheorem{cor}{Corollary}[section]
\newtheorem{lemma}{Lemma}[section]
\theoremstyle{definition}
\newtheorem{prob}{Problem}
\numberwithin{equation}{section}
\newcommand{\e}{\varepsilon}
\newcommand{\N}{\mathbb{N}}
\newcommand{\K}{\mathbb{K}}
\newcommand{\adef}{\begin{defn}}
\newcommand{\zdef}{\end{defn}}
\newtheorem{defn}[theorem]{Definition}
\newcommand{\To}{\longrightarrow}
\def\Ext{\operatorname{Ext}}
\def\PB{\operatorname{PB}}
\def\PO{\operatorname{PO}}
\def\dens{\operatorname{dens}}
\def\IM{\operatorname{Im}}
\def\Ext{\operatorname{Ext}}
\def\pd{\operatorname{pd}}
\def\id{\operatorname{id}}
\def\fd{\operatorname{fd}}
\begin{document}

\title{Homological dimensions of Banach spaces}

\author[F\'elix~Cabello~S\'anchez]{F.~Cabello~S\'anchez}
\address{Departamento de Matem\'aticas and IMUEx, Universidad de Extremadura\\
Avenida de Elvas\\ 06071-Badajoz\\ Spain}
\email{fcabello@unex.es}
\author[Jes\'{u}s~M.~F.~Castillo]{J.\,M.\,F.~Castillo}
\address{Departamento de Matem\'aticas\\ Universidad de Extremadura and IMUEx\\
Avenida de Elvas\\ 06071-Badajoz\\ Spain}
 \email{castillo@unex.es}
\author[Ricardo~Garc\'\i{}a]{R.~Garc\'\i{}a}
\address{Departamento de Matem\'aticas\\ Universidad de Extremadura and IMUEx\\
Avenida de Elvas\\ 06071-Badajoz\\ Spain}
\email{rgarcia@unex.es}

\subjclass{46M15, 46M18, 46M10}

\maketitle

\markright{Homological dimensions of Banach spaces}


\begin{abstract}
The purpose of this paper is to lay the foundations for the study of the problem of when $\Ext^n(X, Y)=0$ in Banach spaces. We provide a number of examples of couples $X,Y$ so that $\Ext^n(X,Y)$ is (or is not) $0$. We show that $\Ext^n(\mathcal  K, \mathcal  K)\neq 0$ for all $n\in \N$ when $\mathcal K$ is Kadec' space. In particular, both the projective and the injective dimensions of $\mathcal K$ are infinite.

Bibliography: 48~titles.
\end{abstract}


\footnotetext{This research has been supported in part by MINCIN, Project MTM2016-76958-C2-1-P, and Junta de Extremadura, Project IB16056.}

\section{Introduction}

The ``homological theory of Banach spaces'', as it has been developed so far,  wheels around the existence, meaning and relationships between the functors $\mathfrak L$ (linear continuous operators) and  $\Ext$ (exact sequences of Banach spaces modulo equivalence). Such relations are based on two facts:
\begin{itemize}
\item $\Ext$ is the derived functor of $\mathfrak L$.
\item There is an object, the long homology sequence, that connects both.
\end{itemize}
The expositions \cite{mitc,fresi} can serve as a basic introduction to Yoneda Ext functors in arbitrary exact categories.

The purpose of this paper is to lay the foundations for the study of $\Ext^n$ in the category of Banach spaces. That purpose sets the general tone of the paper: the first definition the reader will encounter is that of exact sequence of length $n$ and  $\Ext^n$, the $n$-{th} derived  functor of $\mathfrak L$ (the functor Hom in our ambient category of Banach spaces). We will not dig in this paper on the precise way in which the derivation of functors works; rather, we will take the long homology sequences as the cornerstone object that operatively defines derivation, as can be seen in Section~\ref{sec:red}. We have included an Appendix with a succint description of the homology sequences and the material on pushout and pullback sequences that is indispensable to understand the paper.

Section~\ref{sec:extn} contains the main results of the paper. In general these combine the apparatus of homological algebra with specific results on Banach spaces: some of them clearly belong to the Banach space lore, while others are very recent.

Section~\ref{sec:dim} contains some material on the projective and injective dimension of Banach spaces. We follow ideas of Wodzicki and we develop some of his results sketched in \cite{wod},  which is still the main reference on this topic. Our main result in this line is that $\Ext^n(\mathcal K, \mathcal K)$ is nonzero when $\mathcal K$ is Kadec space and thus both the projective and injective dimensions of $\mathcal K$ are infinite.

Section~\ref{sec:Q} contains some observations on the ``homological interaction'' between the category of Banach and the larger one of quasi Banach spaces.
\medskip

Finally, let us remark that, while the study of $\Ext^n$ in Banach spaces is still incipient, the connections between homological algebra and the theory of locally convex
spaces was firmly established by Palamodov \cite{palaf,palah} very early.
We refer the reader to Wengenroth's monograph \cite{w-LNM} for a nice introduction to this topic and to \cite{weng0} for more advanced results.\\

{\bf Acknowledgement.} The authors acknowledge the tremendous efforts of the referee in elaborating a thorough report on a text containing an intolerably large number of mistakes, typos and inaccuracies. His/her notes helped us during the preparation of the present readable version of the paper.

\section{The functor $\Ext^n$ in Banach spaces}\label{sec:def-extn}
We introduce some notation and quote the necessary results from Yoneda extension theory.  Most of these are given in full detail in \cite[\textsection 6]{fresi} and  \cite[VII]{mitc}.

\subsection{Exact sequences} An exact sequence of Banach spaces is a (finite or infinite) diagram
$$\begin{CD} \cdots@>>> Y_{i-1} @>>> Y_i @>>> Y_{i+1}@>>>  \cdots\qquad(\mathscr Y)\end{CD}$$
with $i\in {\mathbb Z}$, formed by Banach spaces and (linear continuous) operators such that the kernel of each arrow coincides
with the image of the preceding one. An $n$-exact sequence $\mathscr E$ between $Y$ and $X$ is an exact sequence
$$\begin{CD}
0@>>> Y@>>> E_1 @>>> \cdots @>>> E_n @>>> X @>>> 0\qquad(\mathscr E)
\end{CD}$$
having $n$ terms between $Y$ and $X$ and all the rest $0$. We then call $n$ the length of $\mathscr E$; $1$-exact sequences
are the popular short exact sequences. A morphism $\phi: \mathscr E \longrightarrow \mathscr F$ between two $n$-exact sequences is a commutative diagram
$$\begin{CD}
0@>>>Y @>>> E_1 @>>>  \cdots @>>> E_n @>>> X@>>> 0 \qquad(\mathscr E)\\
&&@VV\phi_{-} V @VV\phi_{1} V  & & @VV\phi_{n} V @VV\phi_{+} V\\
0@>>> Y' @>>> F_1 @>>> \cdots @>>> F_n @>>> X'@>>> 0 \qquad(\mathscr F)
\end{CD}$$
Given two $n$-exact sequences with the same end spaces $Y,X$, we write $\mathscr E \longrightarrow \mathscr F$ (or $\mathscr F \longleftarrow \mathscr E$) to indicate the existence of a morphism
$\phi:\mathscr E \longrightarrow \mathscr F$ with $\phi_-={\bf I}_Y$ and $\phi_+={\bf I}_X$.
We introduce an equivalence relation on the class of $n$-exact sequences with fixed ends $Y$ and $X$ by declaring $\mathscr E \sim \mathscr F$ if and only if there are finitely many $n$-exact sequences with the same ends  $(\mathscr G_{j})_{1\leq j \leq k}$ so that
\begin{equation}\label{eq:E=F}
\mathscr E \longrightarrow  \mathscr G_1 \longleftarrow \mathscr G_2 \longrightarrow \cdots \longleftarrow \mathscr G_k \longrightarrow \mathscr F.
\end{equation}
It can be shown (see \cite[6.40]{fresi}) that it only takes two exact sequences and three morphisms to establish $\mathscr E \sim \mathscr F$, that is, thus
$\mathscr E \sim \mathscr F$ if and only if there is a chain as (\ref{eq:E=F}) with $k=2$.

We define $\Ext^n(X, Y)$ to be the \emph{set} of equivalence classes of $n$-exact sequences between $Y$ and $X$. Let us agree that $\Ext(X,Y)=\Ext^1(X,Y)$; if $Y=X$ we just write  $\Ext^n(X)$.
We emphasize that $\Ext^n(X, Y)$ is a set, even is the \emph{class} of all $n$-exact sequences between $Y$ and $X$ is ``too large'' to be a set.
 See \cite[6.20]{fresi} for these theoretical issues. We write $\mathscr E\in\Ext^n(X,Y)$ when $\mathscr E$ is an $n$-exact sequence between $Y$ and $X$, with the understanding that it is the equivalence class of $\mathscr E$ what really belongs to $\Ext^n(X,Y)$.

\subsection{Splicing and cutting sequences}
The set  $\Ext^n(X, Y)$ admits a natural linear structure whose operations are defined by means of pullbacks and pushouts. Our results are so pedestrian that everything we need to know about that structure is that it exists (and so it makes sense to say that the homology sequences are exact; see the Appendix B for this topic) and how to identify the zero element, which is a little different, depending on whether $n=1$ or $n\geq 2$:
The zero in $\Ext(X,Y)$ is the (class of the) direct sum sequence
$$\begin{CD}
0@>>> Y@>\imath>> Y \oplus X@>\pi>> X  @>>> 0
\end{CD}$$
where $\imath(y)=(y,0)$ and $\pi(y,x)=x$. If $n\geq 2$, the zero of $\Ext^n(X,Y)$ is the (class of the) sequence
$$\begin{CD}
0@>>> Y@= Y @>0>> \cdots @>0>> X @= X @>>> 0.
\end{CD}$$
We write $\Ext^n(X,Y)=0$ if every $n$-exact sequence between $Y$ and $X$ is equivalent to the zero sequence. Two elements $\mathscr E\in \Ext^n(Z,Y)$ and $\mathscr F\in \Ext^m(X,Z)$ can be spliced through $Z$ to get the $(n+m)$-exact sequence
$$\xymatrixrowsep{0.5pc} \xymatrixcolsep{1.6pc}
\xymatrix{0\ar[r] & Y\ar[r]^{} &E_1\ar[r]  &\cdots \ar[r]  &E_n \ar[rr]  \ar[dr]  &&F_1 \ar[r] &\cdots \ar[r] &F_n\ar[r]  &X\ar[r]& 0\\
& & && & Z \ar[ur]    }
$$
denoted by $\mathscr E\mathscr F$. It is easy to see that the class of $\mathscr E\mathscr F$ in $\Ext^{m+n}(X,Y)$ depends only on the classes of $\mathscr E$ and $\mathscr F$ and that if $\mathscr E\sim 0$ or $\mathscr F\sim 0$, then $\mathscr E\mathscr F\sim 0$.
And conversely, if $n\geq 2$, then every exact sequence
$$
\xymatrixcolsep{2.7pc}
\xymatrix{0\ar[r] & Y\ar[r]^{f_0} &F_1\ar[r]^{f_1}  &\cdots\ar[r]  &F_n \ar[r]^{f_n}  &X\ar[r] & 0 & (\mathscr F)
}
$$
can be cut into shorter pieces as follows: choose $1<i\leq n$; as $\mathscr F$ is exact at $F_i$ we have $\ker f_i=\IM f_{i-1}$ and, if we call $Z$ this space, we have two exact sequences
$$
\xymatrixcolsep{2.6pc}
\xymatrix{0\ar[r] & Y\ar[r]^{f_0} &F_1\ar[r]^{f_1}  &\cdots\ar[r]  &F_{i-1} \ar[r]^{f_{i-1}}  &Z\ar[r] & 0 & (\mathscr L)\\
0\ar[r] & Z\ar[r]^{\text{inclusion}} &F_i\ar[r]^{f_i}  &\cdots\ar[r]  &F_{n} \ar[r]^{f_i}  &X\ar[r] & 0 & (\mathscr R)
}
$$
and, clearly, $\mathscr F= \mathscr L \mathscr R$.
An obvious consequence is:

\begin{cor}\label{n-cor}  {\rm (a)} If $\Ext^n(X, \cdot)=0$, then $\Ext^m(X, \cdot)=0$ for all $m>n$.

 {\rm (b)} If $\Ext^n(\cdot,Y)=0$, then $\Ext^m(\cdot,Y)=0$ for all $m>n$.
\end{cor}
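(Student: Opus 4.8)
The plan is to reduce both statements to the single step $m=n+1$ and then iterate, exploiting the two facts recorded just above the corollary: that splicing any sequence with a null sequence produces a null sequence, and that any sequence of length $\geq 2$ can be cut into shorter pieces. Since the one-step implication will be established for every $n$, the assumed vanishing at level $n$ then propagates upward through all larger $m$, so it suffices to prove that $\Ext^n(X,\cdot)=0$ forces $\Ext^{n+1}(X,\cdot)=0$, and dually for the second variable.

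For part (a), fix an arbitrary space $W$ and an arbitrary
$$0\to W\to F_1\to\cdots\to F_{n+1}\to X\to 0 \qquad(\mathscr F)$$
in $\Ext^{n+1}(X,W)$. As $n+1\geq 2$, I would cut $\mathscr F$ at the interior position $i=2$, setting $Z=\ker f_2=\IM f_1$. This yields a short exact sequence $\mathscr L\colon 0\to W\to F_1\to Z\to 0$ together with a sequence $\mathscr R\colon 0\to Z\to F_2\to\cdots\to F_{n+1}\to X\to 0$ of length exactly $n$, with $\mathscr F=\mathscr L\mathscr R$. Now $\mathscr R\in\Ext^n(X,Z)$, which vanishes by hypothesis, so $\mathscr R\sim 0$ and hence $\mathscr F=\mathscr L\mathscr R\sim 0$. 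Since $W$ and $\mathscr F$ were arbitrary, $\Ext^{n+1}(X,\cdot)=0$.

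For part (b) the argument is the mirror image: given $\mathscr F\in\Ext^{n+1}(W,Y)$ of the form $0\to Y\to F_1\to\cdots\to F_{n+1}\to W\to 0$, I would instead cut at $i=n+1$, so that the left piece $\mathscr L\colon 0\to Y\to F_1\to\cdots\to F_n\to Z\to 0$ has length $n$ while the right piece $\mathscr R$ is short. This time $\mathscr L\in\Ext^n(Z,Y)=0$, so $\mathscr L\sim 0$ and again $\mathscr F=\mathscr L\mathscr R\sim 0$.

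There is essentially no genuine obstacle: the result is a bookkeeping consequence of the cutting/splicing formalism. The only point requiring attention is choosing the cut position correctly, namely $i=2$ for the fixed-first-variable statement and $i=n+1$ for the fixed-second-variable statement, so that the piece landing in the group assumed to vanish has the correct length $n$ and the correct pair of end spaces. The complementary short piece then plays no role precisely because splicing with a null sequence is null.
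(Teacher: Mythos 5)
Your proof is correct and is exactly the argument the paper intends: the corollary is stated there as ``an obvious consequence'' of the cutting/splicing formalism, and your cuts (at $i=2$ for part (a), at $i=n+1$ for part (b)) together with the fact that splicing with a null sequence is null are precisely the intended filling-in of that claim. The only cosmetic difference is that you iterate the one-step implication $n\to n+1$, whereas one could equally cut a length-$m$ sequence once at position $m-n+1$ to isolate a length-$n$ piece directly; both are equivalent bookkeeping.
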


As a rule  $\mathscr{EF}\sim 0$ does not imply that $\mathscr{E}\sim 0$ or $\mathscr{F}\sim 0$; see \cite[Section 6.4]{cck} or \cite[Section 5.3]{ext2} for some striking examples in which  $\mathscr{E} = \mathscr{F}$. One has, however:
\begin{lemma}\label{lem:EF=0=>F=0} {\rm (a)}
Let $\mathscr E$ be a short exact sequence $\xymatrixcolsep{1.75pc}
\xymatrix{
0 \ar[r] & Y \ar[r] & E \ar[r] & Z\ar[r] & 0
}
$ and $\mathscr F\in \Ext^n(X,Z)$, where $n\geq 1$. If $\Ext^n(X,E)=0$ and $\mathscr{EF}\sim 0$ in $\Ext^{n+1}(X,Y)$, then  $\mathscr F\sim 0$.

{\rm (b)}
Let $\mathscr F$ be a short exact sequence $
\xymatrix{
0 \ar[r] & Z \ar[r] & F \ar[r] & X\ar[r] & 0
}
$ and $\mathscr E\in \Ext^n(Z,Y)$, where $n\geq 1$. If $\Ext^n(F,Y)=0$ and $\mathscr{EF}\sim 0$ in $\Ext^{n+1}(X,Y)$, then  $\mathscr E\sim 0$.
\end{lemma}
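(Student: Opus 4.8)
The plan is to deduce both statements from the long exact sequence in homology, which the Appendix supplies as the operative definition of the derived functors, together with the identification of its connecting homomorphisms with Yoneda splicing. Parts (a) and (b) are formally dual — one passes from the covariant to the contravariant sequence by reversing all arrows — so I would prove (a) in full and obtain (b) by the same argument applied to $\Ext^n(\,\cdot\,,Y)$.

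For (a), regard $\mathscr E$ as a class in $\Ext^1(Z,Y)$ and keep $X$ fixed in the first variable. The short exact sequence $\mathscr E$ induces the covariant long homology sequence
$$
\cdots \To \Ext^n(X,E)\stackrel{p_*}{\To} \Ext^n(X,Z)\stackrel{\delta}{\To} \Ext^{n+1}(X,Y)\To\cdots,
$$
where $p_*$ is induced by the quotient map $E\to Z$ and the connecting map $\delta\colon\Ext^n(X,Z)\To\Ext^{n+1}(X,Y)$ is precisely the Yoneda splice $\mathscr F\mapsto \mathscr{EF}$. The hypothesis $\mathscr{EF}\sim0$ therefore says exactly that $\mathscr F\in\ker\delta$. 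By exactness at $\Ext^n(X,Z)$ one has $\ker\delta=\IM p_*$, and since $\Ext^n(X,E)=0$ forces $\IM p_*=0$, the map $\delta$ is injective and $\mathscr F\sim0$ follows.

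For (b), regard $\mathscr F$ as a class in $\Ext^1(X,Z)$ and keep $Y$ fixed in the second variable; now $\mathscr F$ induces the contravariant long homology sequence
$$
\cdots \To \Ext^n(F,Y)\stackrel{j^*}{\To} \Ext^n(Z,Y)\stackrel{\delta}{\To} \Ext^{n+1}(X,Y)\To\cdots,
$$
whose connecting map is the splice $\mathscr E\mapsto \mathscr{EF}$. The identical exactness argument, using $\Ext^n(F,Y)=0$ to kill $\IM j^*$, makes $\delta$ injective, whence $\mathscr{EF}\sim0$ yields $\mathscr E\sim0$.

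The only genuine content — and the step I would treat as the main obstacle — is the identification of the connecting homomorphism $\delta$ with splicing by $\mathscr E$ (resp. $\mathscr F$): that the boundary map in the long homology sequence is Yoneda product with the inducing short exact sequence. This is exactly what makes the long homology sequence ``connect'' $\mathfrak L$ and $\Ext$, so I would make sure the Appendix records the connecting map in this splicing form. Once that is granted, both parts reduce to the triviality that an injective homomorphism has trivial kernel, and no Banach-space–specific input is needed.
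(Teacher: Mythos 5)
Your proof is correct and follows essentially the same route as the paper: both parts are read off from the long homology sequences (covariant for (a), contravariant for (b)), using $\Ext^n(X,E)=0$ (resp. $\Ext^n(F,Y)=0$) to make the connecting map injective, with the key identification of the connecting homomorphism as the Yoneda splice $\mathscr F\mapsto\mathscr{EF}$ (resp. $\mathscr E\mapsto\mathscr{EF}$). That identification is exactly what the paper's Appendix records for the maps labelled $\mathscr Z_*$ and $\mathscr Z^*$, so your ``main obstacle'' is already supplied there and nothing further is needed.
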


\begin{proof}
(a) Let us take a look at the following section of the ``covariant'' homology sequence associated to $X$ and $\mathscr E$:
$$
\xymatrixcolsep{1.2pc}
\xymatrix{
\dots \ar[r] & \Ext^{n}(X,Y)  \ar[r]^0 & \Ext^{n}(X,E)  \ar[r]^0 & \Ext^{n}(X,Z)\ar[rr]^-{\text{injective}} & & \Ext^{n+1}(X,Y)\ar[r] &\dots
}
$$
and note that $\Ext^{n}(X,Z)\To \Ext^{n+1}(X,Y)$ acts sending $\mathscr F$ to $\mathscr{EF}$. To prove (b) use the contravariant sequence associated to $Y$ and $\mathscr F$.\hfill $_\square$
\end{proof}

\section{Reduction of length}\label{sec:red}
When working in categories with enough projective or injective elements, as it is the case of Banach spaces, there is a well-known representation of $\Ext$ in terms of operators.
Let us begin with the projective case.

Let $X$ be a Banach space. A projective presentation of $X$ is a short exact sequence
\begin{equation}\label{eq:kX->P->X}\tag{$\mathscr P$}
\xymatrix{
0\ar[r] & \kappa(X) \ar[r] & \mathcal P  \ar[r]^\pi & X \ar[r] & 0
}
\end{equation}
where $\mathcal{P}$ is a projective Banach space (necessarily isomorphic to $\ell_1(I)$ for some index set $I$ by a result of K\"othe \cite[3(6)]{koethe}). It is easy to see that  $\mathcal{P}$ is projective if and only if $\Ext(\mathcal{P},\cdot)=0$, in which case $\Ext^{n}(\mathcal{P},\cdot)=0$ for all $n \geq 1$, by Corollary~\label{n-cor}(a).
Now, if $Y$ is another Banach space we can run the (contravariant) homology sequence to obtain a long exact sequence
$$
\xymatrixrowsep{1pc}\xymatrixcolsep{1.65pc}
\xymatrix{
0 \ar[r]&\frak L(X, Y) \ar[r] & \frak L(\mathcal P, Y) \ar[r] & \frak L(\kappa(X), Y) \ar[r] &\Ext(X,Y)\\
& \ar[r] &\Ext(\mathcal P,Y) \ar[r] &\Ext(\kappa(X),Y) \ar[r] &\Ext^2(X,Y) \\
 \ar[r]&\dots \ar[r] & \Ext^{n}(\mathcal P, Y) \ar[r] & \Ext^{n}(\kappa(X), Y) \ar[r] &\Ext^{n+1}(X,Y)\ar[r] &\dots\\
}
$$
As $\Ext^{n}(\mathcal P, Y)=0$ for all $n\geq 1$.
This shows that
\begin{equation}\label{eq:reduction}
\Ext^{n+1}(X,Y)= \Ext^{n}(\kappa(X), Y)
\end{equation}
in the sense that, up to equivalence, every $(n+1)$-exact sequence between $Y$ and $X$ can be obtained as the splicing $\mathscr E\mathscr P$:
$$
\xymatrixrowsep{0.5pc}\xymatrixcolsep{2.25pc}
\xymatrix{
0\ar[r] & Y\ar[r] & E_{1}\ar[r] &\dots  \ar[r] & E_{n} \ar[rr] \ar[dr] & &\mathcal P \ar[r] &X \ar[r] & 0\\
&&&&&\kappa(X)\ar[ur]
}
$$
Besides, $\mathscr E\sim 0$ in $\Ext^{n}(\kappa(X), Y)$
if and only if $\mathscr E\mathscr P\sim 0$ in $\Ext^{n+1}(X, Y)$. This is just a particular case of Lemma~\ref{lem:EF=0=>F=0}(b).
The interpretation for $\Ext(X,Y)$ is somehow different. Since $\frak L(\mathcal P, Y)\neq 0$  the exactness at  $\Ext(X,Y)$ means that every short exact sequence $0\To Y\To E\To X\To 0$ fits into a (necessarily pushout) diagram
$$
\xymatrix{
0\ar[r] & \kappa(X) \ar[d]^u\ar[r] & \mathcal P \ar[r]\ar[d] & X\ar[r]\ar@{=}[d] & 0\\
0\ar[r] & Y \ar[r] & E \ar[r] & X\ar[r] & 0
}
$$
for some $u\in \frak L(\kappa(X), Y)$ which admits an extension to $\mathcal{P}$ if and only if the extension splits. This obviously follows from the lifting property of $\mathcal P$. Thus,
\begin{equation}\label{eq:Ext=L/restrictions}
\Ext(X,Y)=\frac{\frak L(\kappa(X), Y)}{\imath^*[\frak L(\mathcal{P}, Y)]},
\end{equation}
where $\imath^*: \frak L(\mathcal{P}, Y)\To \frak L(\kappa(X), Y)$ is the restriction map.

Let us assume that for each Banach space $X$ a projective presentation as in (\ref{eq:kX->P->X})  has been chosen. Then we can attach to every $X$ a sequence of  \,``kernels'' inductively defined as follows: $\kappa^{1}(X)=\kappa(X)$ and $\kappa^{n+1}(X)=\kappa(\kappa^{n}(X))$. For example, we can take $\kappa(X)=\ker Q$ where $Q:\ell_1(B_X) \To X$ is the natural quotient map, but we prefer not to be so specific.

For each $n\geq 1$, by successive splicing, we can construct an $n$-exact sequence
$$
\xymatrixcolsep{1.2pc}
\xymatrixrowsep{0.5pc}
\xymatrix{
0 \ar[r] & \kappa^{n}X  \ar[r]   & \mathcal P_n \ar[dr]   \ar[rr]  & & \cdots  \ar[dr] \ar[rr]  & & \mathcal P_2 \ar[rr]  \ar[dr]  & & \mathcal P  \ar[r]   &  X \ar[r] & 0\\
& & &\kappa^{n-1}X \ar[ur] && \kappa^{2}X \ar[ur] && \kappa^{1}X \ar[ur]
}
$$
Now, given $\mathscr E\in\Ext^n(X,Y)$, by decomposition into short sequences and applying successively the lifting property of $\mathcal P, \mathcal P_2\dots$ one obtains a commutative diagram
$$
\xymatrixrowsep{0.5pc}
\xymatrixcolsep{1.17pc}
\xymatrix{
0 \ar[r] & \kappa^{n}X \ar[r]\ar[dddd]^u   & \mathcal P_n  \ar[dddd]\ar[dr]   \ar[rr]  & & \cdots  \ar[dr] \ar[rr]  & & \mathcal P_2 \ar[dddd] \ar[rr]  \ar[dr]  & & \mathcal P   \ar[dddd] \ar[r]   &  X \ar[r] \ar@{=}[dddd] & 0\\
& & &\kappa^{n-1}X \ar[ur] \ar[dddd] && \kappa^{2}X \ar[dddd] \ar[ur] && \kappa^{1}X \ar[dddd] \ar[ur]
\\
\\
\\
0 \ar[r] & Y \ar[r]   & E_n \ar[dr]   \ar[rr]  & & \cdots  \ar[dr] \ar[rr]  & &  E_2 \ar[rr]  \ar[dr]  & &  E_1  \ar[r]   &  X \ar[r] & 0\\
& & &K_{n-1} \ar[ur] && K_2 \ar[ur] && K_1 \ar[ur]
}
$$

We have:

\begin{proposition}\label{reduction} Let $X$ and $Y$ Banach spaces. Then $$\Ext^n(X, Y)= \Ext^{n-1}(\kappa(X), Y) = \cdots = \Ext^1(\kappa^{n-1}(X), Y)
= \frac{\mathfrak L(\kappa^{n}(X), Y)}{\imath^*[\frak L(\mathcal{P}_n, Y)]}.$$

\end{proposition}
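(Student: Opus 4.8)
The plan is to obtain the whole chain by iterating the one-step reduction (\ref{eq:reduction}) and to close it off with the operator description (\ref{eq:Ext=L/restrictions}). Both ingredients have already been established just above, so the argument is essentially a formal induction on $n$; the real work went into deriving (\ref{eq:reduction}) and (\ref{eq:Ext=L/restrictions}), and here we merely assemble them.

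First I would restate the one-step reduction in the form most convenient for iteration: for every Banach space $W$ and every $m\geq 1$, the connecting map of the contravariant homology sequence associated to $Y$ and the chosen projective presentation $0\To\kappa(W)\To\mathcal P\To W\To 0$ of $W$ is an isomorphism
$$\Ext^{m}(\kappa(W),Y)=\Ext^{m+1}(W,Y),$$
since $\mathcal P$ is projective and hence the two neighbouring terms $\Ext^{m}(\mathcal P,Y)$ and $\Ext^{m+1}(\mathcal P,Y)$ both vanish, leaving the connecting arrow squeezed between two zeros. This is exactly (\ref{eq:reduction}), now with an arbitrary space in the first argument; its legitimacy rests on the standing assumption that a projective presentation has been fixed for \emph{every} Banach space, in particular for each $\kappa^{j}(X)$.

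Next I would run the induction. Applying the displayed isomorphism with $W=\kappa^{j}(X)$, so that $\kappa(W)=\kappa^{j+1}(X)$, and $m=n-1-j$ for $j=0,1,\dots,n-2$, produces the chain
$$\Ext^{n}(X,Y)=\Ext^{n-1}(\kappa(X),Y)=\cdots=\Ext^{1}(\kappa^{n-1}(X),Y),$$
each link being the connecting isomorphism for the presentation of $\kappa^{j}(X)$. For the final equality I would apply the operator description (\ref{eq:Ext=L/restrictions}) to the single space $\kappa^{n-1}(X)$, whose chosen presentation is $0\To\kappa^{n}(X)\To\mathcal P_n\To\kappa^{n-1}(X)\To 0$; this yields
$$\Ext^{1}(\kappa^{n-1}(X),Y)=\frac{\mathfrak L(\kappa^{n}(X),Y)}{\imath^*[\mathfrak L(\mathcal P_n,Y)]},$$
which is the last term of the statement.

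There is no deep obstacle here; the one point deserving care is the bookkeeping. I would check that the composite of the successive connecting isomorphisms realizes the identification by iterated splicing with the long projective resolution drawn above, so that the abstract chain of equalities coincides with the concrete diagram chase in which an arbitrary $\mathscr E\in\Ext^{n}(X,Y)$ is lifted step by step onto the resolution via the lifting property of $\mathcal P,\mathcal P_2,\dots$. I would also note explicitly that each reduction step consumes the projectivity of the corresponding $\mathcal P_j$, needed to kill \emph{both} adjacent $\Ext$ groups, and that (\ref{eq:Ext=L/restrictions}) enters only once, at the foot of the tower, where a genuine cokernel of operators appears in place of a further $\Ext$ group.
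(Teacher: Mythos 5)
Your proposal is correct and follows essentially the same route as the paper's own proof: iterate the one-step reduction (\ref{eq:reduction}) using $\kappa^{k+1}(X)=\kappa(\kappa^{k}(X))$, then apply the operator description (\ref{eq:Ext=L/restrictions}) to $\kappa^{n-1}(X)$ for the final equality. The paper states this in two sentences; your version merely spells out the vanishing of the neighbouring $\Ext(\mathcal P_j,Y)$ terms and the index bookkeeping, which is harmless extra detail.
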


\begin{proof}
All the identities, but the last one, are particular cases of  (\ref{eq:reduction}), taking into account that $\kappa^{k+1}(X)=\kappa(\kappa^k(X))$. The last one is just (\ref{eq:Ext=L/restrictions}) applied to  $\kappa^{n-1}(X)$.
 \hfill $_\square$
\end{proof}

Proceeding by categorical duality (reversing the arrows) we can do the injective version as well. First, an injective presentation of a Banach space $Y$ is a short exact sequence
\begin{equation}\label{eq:Y->I->ckY}\tag{$\mathscr I$}
\xymatrix{
0 \ar[r] & Y\ar[r] &\mathcal I \ar[r]^-\pi & \mathcal I/\imath[Y]=c\kappa(Y)\ar[r] & 0
}
\end{equation}
where $\mathcal I$ is an injective Banach space (necessarily a complemented subspace of some $\ell_\infty(I)$ and, therefore, an $\mathscr L_\infty$-space \cite[Corollary on p. 335]{lindrosep}). Note that $\mathcal{I}$ is injective if and only if $\Ext^{n}(\cdot, \mathcal{I})=0$ for all $n \geq 1$. If $X$ is another Banach space we can activate the covariant homology sequence to obtain the long exact sequence
$$
\xymatrixrowsep{1pc}\xymatrixcolsep{1.6pc}
\xymatrix{
0 \ar[r]&\frak L(X, Y) \ar[r] & \frak L(X, \mathcal I) \ar[r] & \frak L(X, c\kappa(Y)) \ar[r] &\Ext(X,Y)\\
& \ar[r] &\Ext(X, \mathcal I) \ar[r] &\Ext(X, c\kappa(Y)) \ar[r] &\Ext^2(X,Y) \\
 \ar[r]&\dots \ar[r] & \Ext^{n}(X, \mathcal I) \ar[r] & \Ext^{n}(X, c\kappa(Y)) \ar[r] &\Ext^{n+1}(X,Y)\ar[r] &\dots\\
}
$$
As $\Ext^{n}(X, \mathcal I)=0$ for all $n\geq 1$, we have
\begin{equation}\label{eq:reduction-inj}
\Ext^{n+1}(X,Y)= \Ext^{n}(X, c\kappa(Y));\qquad \Ext^1(X,Y)=\frac{\frak L(X, c\kappa(Y))}{\pi_*[\frak L(X, \mathcal I)]},
\end{equation}
where $\pi_*[\frak L(X, \mathcal I)]$ consists of those operators $X\To c\kappa(Y)$ that can be lifted to $\mathcal{I}$.

Now, if we fix an injective presentation as in (\ref{eq:Y->I->ckY})
 ``for each Banach space'' $Y$ and we define recursively $c\kappa^1(Y)=c\kappa(Y)$ and $c\kappa^{k+1}(Y)=c\kappa(c\kappa^{k}(Y))$.  For example, one could take $c\kappa(Y)=\text{coker}(J)$ where $J:Y \To \ell_\infty(B_{Y^*})$ is the obvious embedding, but some flexibility is convenient here.
 Then, for each $n\geq 1$, we can construct an $n$-exact sequence
 $$
\xymatrixcolsep{1.18pc}
\xymatrixrowsep{0.5pc}
\xymatrix{
0 \ar[r] & Y  \ar[r]   & \mathcal I \ar[dr]   \ar[rr]  & & \mathcal I_2  \ar[dr] \ar[rr]  & & \cdots \ar[rr]  \ar[dr]  & & \mathcal I_n  \ar[r]^\pi   &  c\kappa^{n}Y \ar[r] & 0\\
& & & c\kappa Y \ar[ur] &&  c\kappa^{2}Y \ar[ur] &&  c\kappa^{n-1}Y \ar[ur]
}
$$
and we have the injective counterpart of  Proposition~\ref{reduction}:

\begin{proposition}\label{reduction-inj} Let $X$ and $Y$ Banach spaces. Then, for every $n\geq 1$,
$$\Ext^n(X, Y)= \Ext^{n-1}(X , c\kappa(Y)) = \cdots = \Ext^1(X, c\kappa^{n-1}(Y))
= \frac{\mathfrak L(X, c\kappa^{n}(Y) )}{{\pi_*[\frak L(X, \mathcal I_n)]}}.$$
\end{proposition}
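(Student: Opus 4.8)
The plan is to dualize the proof of Proposition~\ref{reduction} line for line: run the \emph{covariant} homology sequence rather than the contravariant one and simply read off the two identities already recorded in (\ref{eq:reduction-inj}). No new homological input is required beyond those formulas and the recursion $c\kappa^{k+1}(Y)=c\kappa(c\kappa^{k}(Y))$; the remaining work is bookkeeping with indices.

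First I would establish the chain $\Ext^n(X,Y)=\Ext^{n-1}(X,c\kappa(Y))=\cdots=\Ext^1(X,c\kappa^{n-1}(Y))$ by iterating the dimension-shifting isomorphism $\Ext^{m+1}(X,Y)=\Ext^{m}(X,c\kappa(Y))$, which is the first formula in (\ref{eq:reduction-inj}). Because an injective presentation has been fixed for \emph{every} Banach space, I may apply this isomorphism with $c\kappa^{k}(Y)$ in place of $Y$ at each stage; the recursion then lets the equalities telescope, and $n-1$ applications push the second slot from $Y$ up to $c\kappa^{n-1}(Y)$ while dropping the exponent from $n$ down to $1$.

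For the last identity I would invoke the second formula in (\ref{eq:reduction-inj}), namely $\Ext^1(X,Y)=\frak L(X,c\kappa(Y))/\pi_*[\frak L(X,\mathcal I)]$, applied to $c\kappa^{n-1}(Y)$. Cutting the constructed $n$-exact sequence at its last step exhibits the injective presentation $0\to c\kappa^{n-1}(Y)\to\mathcal I_n\to c\kappa^n(Y)\to0$ of $c\kappa^{n-1}(Y)$, so here $c\kappa(c\kappa^{n-1}(Y))=c\kappa^n(Y)$ and the ambient injective is $\mathcal I_n$. Substituting these yields $\Ext^1(X,c\kappa^{n-1}(Y))=\frak L(X,c\kappa^n(Y))/\pi_*[\frak L(X,\mathcal I_n)]$, which is precisely the asserted right-hand side.

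The only genuine point to verify — and the step I expect to be the (mild) obstacle — is that the dimension-shifting isomorphism is legitimately available at every stage, i.e. that $\Ext^m(X,\mathcal I_k)=0$ for all $m\geq1$ and for each injective $\mathcal I_k$ occurring in the presentations. This is exactly the injectivity characterization $\Ext^m(\cdot,\mathcal I)=0$ for all $m\geq1$ recorded just before (\ref{eq:reduction-inj}) (equivalently, $\Ext^1(\cdot,\mathcal I)=0$ together with Corollary~\ref{n-cor}(b)); granting it, every connecting map in the covariant homology sequence is an isomorphism and the argument collapses to the bookkeeping above.
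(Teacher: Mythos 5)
Your proposal is correct and follows exactly the route the paper intends: iterate the dimension-shifting identity in (\ref{eq:reduction-inj}) using the recursion $c\kappa^{k+1}(Y)=c\kappa(c\kappa^{k}(Y))$, then apply the $\Ext^1$ formula of (\ref{eq:reduction-inj}) to $c\kappa^{n-1}(Y)$ with its presentation $0\to c\kappa^{n-1}(Y)\to\mathcal I_n\to c\kappa^n(Y)\to 0$, which is precisely the dual of the paper's proof of Proposition~\ref{reduction}. Your added check that $\Ext^m(\cdot,\mathcal I_k)=0$ for all $m\geq 1$ is the same fact the paper records just before (\ref{eq:reduction-inj}), so nothing is missing.
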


\section{$\Ext^n$ problems on Banach spaces}\label{sec:extn}
So far, the study of $\Ext^n$ for Banach spaces has been focused almost exclusively on
short exact sequences. These are somewhat exceptional for two reasons.
First, in a short exact sequence
\begin{equation}\tag{$\mathscr E$}
\xymatrix{0\ar[r] & Y\ar[r]^\imath & E \ar[r]^\pi & X\ar[r] & 0
}
\end{equation}
the map $\imath$ is an isomorphic embedding and $\pi$ defines an isomorphism between  $E/\imath[Y]$ and $X$. For this reason the middle space $E$ is often called a ``twisted sum'' of $Y$ and $X$ in this setting.
Second, and more important, the equivalence relation in $\Ext^1$ simpler than in the case of longer sequences. Indeed
any operator $u$ fitting in a commutative diagram
$$\begin{CD}
0@>>> Y @>>> E @>>> X @>>> 0 \\
&&@| @VVV @|\\
0@>>> Y @>>> F @>>> X@>>> 0\end{CD} $$
with exact rows is an isomorphism by the well-known $3$-lemma and the open mapping theorem. In particular, $\mathscr E\sim 0$ in $\Ext(X,Y)$, that is, it is equivalent to the direct sum sequence $\xymatrix{0\ar[r] & Y\ar[r] & Y\oplus X \ar[r] & X\ar[r] & 0}$ if and only if it splits, that is, there is $P\in\frak L(E,Y)$ such that $P\imath={\bf I}_{Y}$ or, equivalently, there is $S\in\frak L(X, E)$ such that $\pi S={\bf I}_{X}$.

Several important Banach characterizations adopt the form $\Ext(X,Y)=0$. Some easy examples are:
\begin{itemize}
\item $X$ is projective $\iff$ $\Ext(X, \cdot)=0$.
\item $Y$ is injective $\iff$ $\Ext(\cdot, Y)=0$.
\item $Y$ is $\aleph$-injective $\iff$ $\Ext(X, Y)=0$ for every space $X$ with density character strictly less than $\aleph$; \cite[Proposition 5.3]{2132}. When $\aleph=\aleph_1$ this property is referred to as separable injectivity.
\end{itemize}
More sophisticated results to be mentioned are:
\begin{itemize}
\item $X$ is an $\mathscr L_1$-space $\iff$ $\Ext(X, U)=0$ for every
 for every Banach space $U$ complemented in its bidual (or just reflexive). The implication $\implies$ is a particular case of Lindenstrauss' lifting (namely the Lemma in \cite{lind}; see also \cite[Proposition 2.1]{kp}). The converse can be seen in \cite[Proposition 2]{cabecastuni}.

\item $Y$ is an $\mathscr L_{\infty}$-space $\iff$ for every sequence of finite dimensional Banach spaces $(F_n)$ one has  $\Ext(\ell_1(F_n), Y)=0$. This is clearly equivalent to \cite[Proposition 3.1]{cms-LP1}.

\item A separable Banach space $Y$ is isomorphic to $c_0$ $\iff$ $\Ext(X,Y) = 0$ for every separable Banach space $X$. The implication $\implies$ is Sobczyk's Theorem \cite{sobc}. The converse is due to Zippin \cite{zipp}.
\item The Johnson-Zippin theorem \cite[Corollary 3.1]{jz*} asserts that $\Ext(H^*, Y)=0$ for every subspace $H$ of $c_0$ and every $\mathscr L_\infty$-space $Y$.
\end{itemize}
Many $3$-space problems (see \cite{castgonz} for general information on $3$-space problems) reduce to know whether or not $\Ext(X,Y)$ vanishes for suitable choices of $X,Y$. In particular, the so-called Palais problem: Is $\Ext(\ell_2)=0$? negatively solved by Enflo, Lindenstrauss and Pisier \cite{elp} and then by Kalton and Peck \cite{kaltpeck}.
Problems of the type $\Ext^n=0$ have scarcely, if ever, been considered in Banach space theory. Accordingly, before entering into more serious matters we establish the $n$-versions of the previous results:

\begin{theorem}\label{negative} For each of the following choices of $X$ and $Y$ one
has  $\Ext^n(X, Y)=0$ for all $n \geq 1$.
\begin{enumerate}
\item $Y$ is injective or $X$ is projective.
\item $X$ is separable and $Y$ is separably injective. More generally, if $\dens(X)<\aleph$ and $Y$ is $\aleph$-injective. In particular, this yields
\item Sobczyk's theorem or order $n$: $\Ext^n(X, c_0) = 0$ for every separable space $X$.
\item Lindenstrauss lifting of order $n$: $X$ is an $\mathscr L_1$-space and $Y$ is complemented in its bidual. In particular, for every measure $\mu$ one has $\Ext^n(L_1(\mu)) = 0$.
\item Johnson-Zippin theorem of order $n$: if $H$ is a subspace of $c_0$ and $Y$ is an $\mathscr L_\infty$-space, then $\Ext^n(H^*, Y)=0$.
\end{enumerate}
\end{theorem}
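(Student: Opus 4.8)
The plan is to reduce every case to the corresponding $\Ext^1$ statement quoted just before the theorem, using the length-reduction machinery of Section~\ref{sec:red}, and then to check that the hypotheses are preserved by the kernel and cokernel operations $\kappa$ and $c\kappa$. Case (1) needs no reduction: if $X$ is projective then $\Ext^1(X,\cdot)=0$, and if $Y$ is injective then $\Ext^1(\cdot,Y)=0$, so Corollary~\ref{n-cor} immediately upgrades these to all $n\geq1$.

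For (2) I would invoke Proposition~\ref{reduction} to write $\Ext^n(X,Y)=\Ext^1(\kappa^{n-1}(X),Y)$ and then use that $\aleph$-injectivity of $Y$ gives $\Ext^1(W,Y)=0$ whenever $\dens(W)<\aleph$. The only thing to verify is that $\dens(\kappa^{k}(X))<\aleph$ for every $k$, and this is where the (otherwise free) choice of presentation matters: taking $\mathcal P=\ell_1(D)$ for a dense subset $D\subseteq B_X$ with $|D|=\dens(X)$ yields $\dens(\mathcal P)=\dens(X)$, so $\dens(\kappa(X))\leq\dens(\mathcal P)=\dens(X)$ and, inductively, $\dens(\kappa^{k}(X))\leq\dens(X)<\aleph$. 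Case (3) is then literally (2) with $\aleph=\aleph_1$ and $Y=c_0$, which is separably injective by Sobczyk's theorem.

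Cases (4) and (5) use the same template but require a genuine Banach-geometric input. For (4), Proposition~\ref{reduction} gives $\Ext^n(X,Y)=\Ext^1(\kappa^{n-1}(X),Y)$; since $Y$ is complemented in its bidual we have $\Ext^1(W,Y)=0$ for every $\mathscr L_1$-space $W$, so it suffices that $\kappa$ preserve the class of $\mathscr L_1$-spaces, i.e.\ that the kernel of an $\ell_1$-quotient of an $\mathscr L_1$-space be again $\mathscr L_1$; induction then closes the argument, and the ``in particular'' is the case $X=Y=L_1(\mu)$, which is an $\mathscr L_1$-space complemented in its bidual. For (5), I would dually use Proposition~\ref{reduction-inj} to write $\Ext^n(H^*,Y)=\Ext^1(H^*,c\kappa^{n-1}(Y))$ and apply the Johnson--Zippin theorem, whose second variable must be an $\mathscr L_\infty$-space; this reduces to knowing that $c\kappa$ preserves $\mathscr L_\infty$-spaces, i.e.\ that the cokernel of an injective presentation of an $\mathscr L_\infty$-space is again $\mathscr L_\infty$ (the injective middle space being itself $\mathscr L_\infty$). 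These two stability statements are dual to one another under $X\mapsto X^*$ and are the real content of the theorem; I expect establishing or correctly citing them to be the main obstacle, everything else being the formal length reduction of Section~\ref{sec:red}.
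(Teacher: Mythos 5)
Your proposal is correct and follows essentially the same route as the paper's proof: reduction to the case $n=1$ via Propositions~\ref{reduction} and \ref{reduction-inj}, with the density control of the kernels $\kappa^k(X)$ handling (2)--(3) and the stability of $\mathscr L_1$-spaces under $\kappa$ and of $\mathscr L_\infty$-spaces under $c\kappa$ handling (4)--(5). The two stability statements you flag as the ``main obstacle'' are precisely what the paper cites at this point, namely the classical result of Lindenstrauss and Rosenthal \cite[Proposition 5.2]{lindrosep}, so nothing further is needed.
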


\begin{proof} (1) is obvious. (2) is almost equally obvious after one realizes that if $\dens(X)<\aleph$, then one can choose a projective presentation ($\mathscr P$) with $\mathcal P=\ell_1(I)$ and $|I|<\aleph$, so that $\dens (\kappa(X))\leq \dens(\ell_1(I))=|I|<\aleph$. Iterating the argument we see that one can choose $\mathcal P_n$, and thus
$\kappa^n(X)$, of density character less than $\aleph$. Now, by Proposition~\ref{reduction} we have
$$
\Ext^n(X, Y)
= \frac{\mathfrak L(\kappa^{n}(X), Y)}{\imath^*[\frak L(\mathcal{P}_n, Y)]}
$$
and the quotient space is zero since, by the very definition of $\aleph$-injectivity, every operator
$\kappa^{n}(X)\To Y$ extends to $\mathcal P_n$.
To prove (4), we use Proposition \ref{reduction} in the form $\Ext^n(X, Y)= \Ext^1(\kappa^{n-1}(X), Y)$.
A classical result \cite[Proposition 5.2]{lindrosep} yields that if $X$ is an $\mathscr L_1$-space then  $\kappa(X)$ is again an $\mathscr L_1$-space, and thus Lindenstrauss lifting
is enough to conclude. Finally, (5) reduces to the basic case $n=1$ by means of Proposition~\ref{reduction-inj}: we have $\Ext^n(X, Y) = \Ext(X, c\kappa^{n-1}(Y))$
and again \cite[Proposition 5.2]{lindrosep} tell us that $c\kappa^{n-1}(Y)$ must be an $\mathscr L_\infty$-space.
\hfill $_\square$
\end{proof}

The previous results provide a few partial answers to the following general problems:

\begin{itemize}
\item Characterize the Banach spaces $X, Y$ for which
$\Ext^n(X, Y)=0$.
\item Characterize the Banach spaces $X$ for which
$\Ext^n(X)=0$.
\end{itemize}

Nevertheless, it would be a mistake to think that order $n$ results are a simple generalization of order 1 results. The proof of the following result is based on Bourgain's construction of an uncomplemented subspace of $\ell_1$ isomorphic to $\ell_1$. As far as we know, the significance of this fact in the study of $\Ext^2$ was first noticed by Wodzicki  \cite{wod} to whom parts (3) and (4) of the following result are due.

\begin{proposition}\label{bour}
There exist a Banach space $\mathcal B$ such that:
\begin{enumerate}
\item $\mathcal B$ is not an $\mathscr L_\infty$-space (equivalently, $\mathcal B^*$ is not an $\mathscr L_1$-space).
\item $\Ext^n(S, \mathcal B)=0$ for every separable space $S$ and all $n\geq 2$, but $\Ext^2(\cdot, \mathcal B)\neq0$.
\item $\Ext^n(\mathcal B^*,\cdot)=0$ for all $n\geq 2$.
\item $\Ext^n(\cdot,\mathcal B^{**})=0$ for all $n\geq 2$.
\end{enumerate}
\end{proposition}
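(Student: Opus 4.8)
The plan is to manufacture $\mathcal B$ from Bourgain's example in its ``predual'' form and to read off (1), (3), (4) from a single projective presentation, leaving the non-vanishing in (2) as the genuine work. I use Bourgain's theorem in the (equivalent) form that there is a surjection $q\colon c_0\to c_0$ whose kernel $\mathcal B$ is not complemented in $c_0$; dually this is a weak-$*$ closed, uncomplemented copy $\mathcal B^\perp\cong\ell_1$ of $\ell_1$ inside $\ell_1=c_0^*$. Thus $c_0/\mathcal B\cong c_0$ and $\mathcal B^\perp=(c_0/\mathcal B)^*\cong\ell_1$, and we have two mutually adjoint non-split sequences
\[
(\flat)\colon\ 0\to\mathcal B\to c_0\stackrel{q}\to c_0\to0,
\qquad
(\ast)\colon\ 0\to\mathcal B^\perp\to\ell_1\to\mathcal B^*\to0 .
\]
The pivot of everything is that $(\ast)$ is a projective presentation of $\mathcal B^*$ whose kernel $\mathcal B^\perp\cong\ell_1$ is \emph{itself} projective, and Bourgain's theorem is precisely the assertion that $(\ast)$ (hence also $(\flat)$) does not split.

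With this in hand (3), (4) and (1) are formal. Since $\mathcal B^\perp$ is projective, Proposition~\ref{reduction} applied to $(\ast)$ gives $\Ext^n(\mathcal B^*,\cdot)=\Ext^{n-1}(\mathcal B^\perp,\cdot)=0$ for $n\ge2$, i.e. $\pd(\mathcal B^*)\le1$, which is (3). Dualizing $(\ast)$ produces $0\to\mathcal B^{**}\to\ell_\infty\to\ell_\infty\to0$ (duals of $\ell_1(I)$ are injective), an injective presentation of $\mathcal B^{**}$ with injective cokernel, so Proposition~\ref{reduction-inj} yields $\id(\mathcal B^{**})\le1$, which is (4). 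For (1) I argue by contradiction: if $\mathcal B^*=\ell_1/\mathcal B^\perp$ were an $\mathscr L_1$-space, Lindenstrauss' lifting (the implication exploited in Theorem~\ref{negative}(4)) would give $\Ext(\mathcal B^*,\mathcal B^\perp)=0$, because $\mathcal B^\perp\cong\ell_1$ is complemented in its bidual; then $(\ast)$ would split, contradicting Bourgain. Hence $\mathcal B^*$ is not $\mathscr L_1$, equivalently $\mathcal B$ is not an $\mathscr L_\infty$-space.

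The separable half of (2) I read off from the covariant homology sequence of $(\flat)$. For a subspace $V\subseteq\ell_1$ one has $\Ext^1(V,c_0)=0$ by Sobczyk (Theorem~\ref{negative}(3)), so $\Ext^1(V,\mathcal B)$ is the cokernel of the restriction map $\mathfrak L(V,c_0)\to\mathfrak L(V,c_0/\mathcal B)$ induced by $q$; and every operator $V\to c_0/\mathcal B\cong c_0$ first extends to $\ell_1$ (separable injectivity of $c_0$, since $\ell_1/V$ is separable) and then lifts through $q$ (projectivity of $\ell_1$), so that cokernel vanishes. Since for separable $S$ the iterated kernels $\kappa^{k}(S)$ can be chosen inside $\ell_1$, Proposition~\ref{reduction} gives $\Ext^n(S,\mathcal B)=\Ext^1(\kappa^{n-1}(S),\mathcal B)=0$ for all $n\ge2$.

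The remaining assertion $\Ext^2(\cdot,\mathcal B)\neq0$ is where I expect the real difficulty. Using the injective presentation $0\to\mathcal B\to\ell_\infty\to\ell_\infty/\mathcal B\to0$ together with Proposition~\ref{reduction-inj}, this is exactly the statement that $\ell_\infty/\mathcal B$ is not injective. A natural attack is to splice $(\flat)$ with Phillips' non-split sequence $(\sharp)\colon0\to c_0\to\ell_\infty\to\ell_\infty/c_0\to0$: by Lemma~\ref{lem:EF=0=>F=0}(b) the spliced class $(\flat)(\sharp)\in\Ext^2(\ell_\infty/c_0,\mathcal B)$ is non-zero as soon as $(\flat)\not\sim0$ and $\Ext^1(\ell_\infty,\mathcal B)=0$, the latter saying precisely that the Bourgain class $(\flat)$ does not extend to a twisted sum of $\mathcal B$ by $\ell_\infty$. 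Unlike (1), (3) and (4), this cannot be obtained from the formal machinery alone and must rest on the specific geometry of Bourgain's example, namely the failure of local complementation of $\mathcal B^\perp$ in $\ell_1$. Establishing $\Ext^1(\ell_\infty,\mathcal B)=0$, equivalently the non-injectivity of $\ell_\infty/\mathcal B$, is the step for which I would budget the most effort.
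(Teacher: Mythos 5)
Your handling of (1), (3), (4) and of the vanishing statement in (2) is correct and essentially the paper's own proof: the paper builds the same space, $\mathcal B=c_0(E_n^\perp)$, by amalgamating Bourgain's finite-dimensional examples (that amalgamation is also what legitimizes your ``equivalent form'' of Bourgain's theorem: \cite{bour} literally provides an uncomplemented copy of $\ell_1$ in $\ell_1$, and the weak$^*$ closedness, i.e.\ the existence of your $q\colon c_0\to c_0$, comes from the finite-dimensional construction, so you are tacitly using it), and it then exploits the two adjoint resolutions $0\to\ell_1\to\ell_1\to\mathcal B^*\to0$ and $0\to\mathcal B^{**}\to\ell_\infty\to\ell_\infty\to0$ exactly as you do. Your deduction of (1) from Lindenstrauss lifting applied to $(\ast)$, and your proof of $\Ext^n(S,\mathcal B)=0$ via Proposition~\ref{reduction} and subspaces of $\ell_1$ (the paper instead runs the covariant homology sequence of $0\to\mathcal B\to c_0\to c_0\to0$ against Theorem~\ref{negative}(3)), are harmless variants.

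The genuine gap is the last claim, $\Ext^2(\cdot,\mathcal B)\neq0$, which is the only substantive assertion of the proposition and which you leave unproved; moreover your plan for it has two defects. First, ``$\Ext^1(\ell_\infty,\mathcal B)=0$'' and ``$\ell_\infty/\mathcal B$ is not injective'' are not equivalent: by the homology sequence of the injective presentation, the former says that every operator $\ell_\infty\to\ell_\infty/\mathcal B$ lifts to $\ell_\infty$, and the only available implication is that it, combined with the nontriviality of $(\flat)$ and Lemma~\ref{lem:EF=0=>F=0}(b), would force $\Ext^2(\ell_\infty/c_0,\mathcal B)\neq0$; nothing gives the converse. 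Second, that unproved hypothesis is far stronger than what is needed: since $c_0$ is complemented in $\mathcal B$, Lemma~\ref{lem:complement} shows that $\Ext(\ell_\infty,\mathcal B)=0$ would imply $\Ext(\ell_\infty,c_0)=0$, i.e.\ that \emph{all} twisted sums of $c_0$ and $\ell_\infty$ are trivial --- precisely the delicate issue surrounding \cite{accgm5} and \cite{mp}, and certainly nothing that the geometry of Bourgain's example hands you. Indeed your instinct that this step must rest on the failure of complementation of $\mathcal B^\perp$ is wrong: the paper's argument is soft and uses Bourgain nowhere here. Namely, $\mathcal B=c_0(E_n^\perp)$ contains a complemented copy $\mathcal C$ of $c_0$ (one normalized vector per $E_n^\perp$), so $\mathcal B=\mathcal C\oplus\mathcal D$; by the Lindenstrauss--Rosenthal theorem \cite{lindrose} separable spaces embed into $\ell_\infty$ in a unique way, whence $\ell_\infty/\mathcal B\approx\ell_\infty/c_0\oplus\ell_\infty/\mathcal D$; and $\ell_\infty/c_0$ is not injective by Amir's theorem \cite[Theorem 1.25]{2132}. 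Hence $\ell_\infty/\mathcal B$ is not injective, which by your own reduction (Proposition~\ref{reduction-inj}, or the paper's use of Lemma~\ref{lem:EF=0=>F=0}(a)) yields $\Ext^2(\cdot,\mathcal B)\neq0$.
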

\begin{proof}
In proving \cite[Theorem 7]{bour} Bourgain shows that there is some constant $C>0$ so that for every $\varepsilon>0$ and every sufficiently large $n\in \N$  there is $N(n)$ an $n$ dimensional subspace $E_n$ of $\ell_1^{N(n)}$ which is $C$-isomorphic to $\ell_1^n$ and so that every projection $P:\ell_1^{N(n)}\To E_n$ has $\|P\|\geq C^{-1} (\log \log n)^{1-\varepsilon}$.
Considering the exact sequence
\begin{equation}\label{seq:Enl1}
\xymatrix{
0\ar[r] & E_n \ar[r] & \ell_1^{N(n)} \ar[r] & \ell_1^{N(n)}/E_n \ar[r] & 0
}
\end{equation}
and the adjoint sequence
\begin{equation}\label{seq:En*}
\xymatrix{
0\ar[r] & \big( \ell_1^{N(n)}/E_n\big)^*= E_n^\perp \ar[r] & \ell_\infty^{N(n)} \ar[r]^{Q_n} & E_n^* \ar[r] & 0
}
\end{equation}
we see that any linear section $S_n$ of the quotient map in the later sequence has norm at least
$C^{-1} (\log \log n)^{1-\varepsilon}$
since $S_n^*$ is a projection of $\ell_1^{N(n)}$ onto $E_n$. Besides each $E_n^*$ is $C$-isomorphic to $\ell_\infty^n$. Amalgamating the sequences (\ref{seq:En*}) we obtain a short exact sequence
\begin{equation}\label{seq:coEn*}
\xymatrix{
0\ar[r] & c_0( E_n^\perp) \ar[r] & c_0(\ell_\infty^{N(n)}) \ar[r]^Q & c_0(E_n^*) \ar[r] & 0
}
\end{equation}
which does not split since if $S$ is a linear section of $Q$, then the restriction of $S$ to the $n$-th coordinate followed by the obvious projection of  $c_0(\ell_\infty^{N(n)})$
onto the $n$-th factor is a section of $Q_n$. The same argument applies to
\begin{equation}\label{seq:looEn*}
\xymatrix{
0\ar[r] & \ell_\infty( E_n^\perp) \ar[r] & \ell_\infty(\ell_\infty^{N(n)}) \ar[r]^Q & \ell_\infty(E_n^*) \ar[r] & 0.
}
\end{equation}
If we denote $c_0( E_n^\perp)$ by $\mathcal B$ (for Bourgain), then  $c_0(\ell_\infty^{N(n)})$ is isometric to $c_0$, while $c_0(E_n^*)$ is isomorphic to $c_0$ and (\ref{seq:coEn*}) provides a nontrivial exact sequence of the form $0\To \mathcal B\To c_0\To c_0\To 0$ ---which is kind of a ``separably injective resolution of length 1'' for $\mathcal B$. Since the bidual of $\mathcal B$ is naturally isometric to $\ell_\infty( E_n^\perp)$ the nontriviality of (\ref{seq:looEn*}) implies that $\mathcal B$ cannot be an $\mathscr L_\infty$-space since the bidual of any $\mathscr L_\infty$-space is an injective Banach space. This proves (1).

To prove the first part of (2), take a separable Banach space $X$. Running the covariant sequence with the first variable fixed at $X$  we obtain the exact sequence
$$
\xymatrixrowsep{1pc}
\xymatrix{
\dots \ar[r] & \frak L(X, c_0) \ar[r] & \Ext(X,\mathcal B) \ar[r] &\Ext(X, c_0)\\
 \ar[r] &\Ext(X, c_0) \ar[r] &\Ext(X, \mathcal B) \ar[r] &\Ext^2(X,c_0) \\
 \dots \ar[r] & \Ext^{n}(X, c_0) \ar[r] & \Ext^{n}(X, \mathcal B) \ar[r] &\Ext^{n+1}(X,c_0)\ar[r] &\dots\\
}
$$
As $\Ext^{n}(X, c_0)=0$ for all $n\geq 1$ we see that  $\Ext^{n}(X, \mathcal B)=0$ for all $n\geq 2$. Replacing $\mathcal B$ by $\mathcal B^{**}$ and $c_0$ by $\ell_\infty$ and leaving the separability assumption on $X$ one obtains (4), although in this case we can stop at $\Ext^2$ in view of Corollary~\ref{n-cor}. To prove (3) just use the contravariant sequence and the projective presentation (actually resolution)
$0\To\ell_1\To\ell_1\To\mathcal B^*\To 0$ adjoint to (\ref{seq:coEn*}).

Finally, to prove that $\Ext^2(\cdot, \mathcal B)\neq 0$, consider the injective presentation of $\mathcal B$
\begin{equation}\tag{$\mathscr I$}
\xymatrix{
0\ar[r] & \mathcal B \ar[r] &\ell_\infty \ar[r] &  \ell_\infty/\mathcal B=c\kappa(\mathcal B) \ar[r] & 0
}
\end{equation}
provided by the embedding
$\mathcal B= c_0( E_n^\perp) \To c_0(\ell_\infty^{N(n)}) \To \ell_\infty(\ell_\infty^{N(n)})=\ell_\infty$.
Now take an injective presentation of $c\kappa(\mathcal B)$
\begin{equation}\tag{$\mathscr H$}
\xymatrix{
0\ar[r] & c\kappa(\mathcal B)\ar[r]^\jmath &\mathcal H \ar[r] & c\kappa^2(\mathcal B) \ar[r] & 0
}
\end{equation}
and splice them to get the 2-exact sequence
\begin{equation}\tag{$\mathscr I\!\mathscr H$}
\xymatrixrowsep{0.5pc}
\xymatrix{0\ar[r] & \mathcal B\ar[r] &  \ell_{\infty}  \ar[rr] \ar[dr]  &&\mathcal H  \ar[r] &c\kappa^2(\mathcal B) \ar[r]& 0 \\
& && c\kappa(\mathcal B) \ar[ur]   }
\end{equation}
As $\ell_\infty$ is injective we know from Lemma~\ref{lem:EF=0=>F=0}(a) that if $\mathscr I\mathscr H\sim 0$, then $ \mathscr H$ splits which cannot be. Indeed, if $\mathscr H$ splits the ``subspace''  $c\kappa(\mathcal B)=\ell_\infty/\mathcal B$ would be injective, as a complemented subspace of an injective space. To see that this is not the case we first observe that $\mathcal B=c_0( E_n^\perp)$ contains a complemented subspace isomorphic to $c_0$ (just pick a vector in each  $ E_n^\perp$), so that $\mathcal B= \mathcal C\oplus \mathcal D$, with $\mathcal C$ isomorphic to $c_0$.
On the other hand, by the Lindenstrauss-Rosenthal theorem,
a separable space embeds into $\ell_\infty$ in a unique form \cite{lindrose}; thus we have isomorphisms
$$
\ell_\infty/\mathcal B\, \approx\, \ell_\infty/\mathcal C \oplus \ell_\infty/\mathcal D\, \approx\,  \ell_\infty/c_0 \oplus \ell_\infty/\mathcal D$$
and since $\ell_\infty/c_0 $ is not injective (a result by Amir \cite[Theorem 1.25]{2132}) neither $\ell_\infty/\mathcal B$ is.
\hfill $_\square$
\end{proof}

We pass now to new, maybe unexpected, results. Palamodov asked in \cite[Problem 6]{palah}: \emph{Is $\Ext^2(\cdot, Y)=0$ for any Fr\'{e}chet space}?
A (negative) solution to Palamodov's problem was provided by Wengenroth in \cite[Question~ 6]{weng}. A more concrete one in the domain of Banach spaces appears in \cite{castrica}. The question of whether $\Ext^2(\ell_2)=0$ was posed in \cite{cck}, reiterated in  \cite{castrica} and has been recently solved in the negative in \cite[Theorem 4.6]{ext2}.
Actually, it is shown in  \cite[Corollary 5.1]{ext2} that $\Ext^2(X, Y)\neq 0$ if  $X$ and $Y$ are  Banach spaces containing $\ell_2^n$ uniformly complemented, for instance if they have nontrivial type $p > 1$. Let us record the following easy remark before continuing:

\begin{lemma}\label{lem:complement}
If $A$ and $B$ are complemented subspaces of $X$ and $Y$, respectively, and $\Ext^n(X, Y)=0$, then $\Ext^n(A, B)=0$.
\end{lemma}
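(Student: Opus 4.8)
The plan is to exploit the fact that $\Ext^n(\cdot,\cdot)$ is a bifunctor --- contravariant in the first variable through pullback and covariant in the second through pushout --- together with the fact that a complemented subspace is a \emph{retract}. Write $j\colon A\To X$ for the inclusion and $p\colon X\To A$ for a projection, so that $pj=\mathbf{I}_A$, and likewise $i\colon B\To Y$, $q\colon Y\To B$ with $qi=\mathbf{I}_B$. For a morphism $f$ in the first variable I denote by $f^*$ the induced pullback map on $\Ext^n$, and for a morphism $g$ in the second variable by $g_*$ the induced pushout map. Both are homomorphisms for the natural linear structure, so they send the zero class to the zero class, and they obey the functorial identities $(ff')^*=f'^*f^*$, $(gg')_*=g'_*g_*$, $\mathbf{I}^*=\id$ and $\mathbf{I}_*=\id$. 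All of this is standard Yoneda theory and may be found in the references quoted in Section~\ref{sec:def-extn}.

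Now fix $\mathscr E\in\Ext^n(A,B)$ and form $p^*i_*\mathscr E$. Pushing out along $i$ replaces the left end $B$ by $Y$, while pulling back along $p$ replaces the right end $A$ by $X$, so $p^*i_*\mathscr E\in\Ext^n(X,Y)$. By hypothesis $\Ext^n(X,Y)=0$, whence $p^*i_*\mathscr E\sim 0$. Applying first $j^*$ and then $q_*$ --- each of which preserves the zero class --- we obtain $q_*j^*p^*i_*\mathscr E\sim 0$. On the other hand, the functorial identities give $j^*p^*=(pj)^*=\id$ on $\Ext^n(A,\cdot)$ and $q_*i_*=(qi)_*=\id$ on $\Ext^n(A,\cdot)$, so $q_*j^*p^*i_*\mathscr E=q_*i_*\mathscr E=\mathscr E$. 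Therefore $\mathscr E\sim 0$, and since $\mathscr E$ was arbitrary, $\Ext^n(A,B)=0$.

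The only point requiring care is the behaviour of pullback and pushout on $n$-exact sequences with $n\geq 2$: the pushout acts on the leftmost term and the pullback on the rightmost, so the two operations are performed at opposite ends of the sequence, and the functorial identities above hold on the nose once one passes to equivalence classes. For $n=1$ this is the familiar manipulation of short exact sequences; for longer sequences it is the content of the Yoneda calculus, and no genuine obstacle arises. An alternative, slightly more laborious route would first reduce to $n=1$ via Proposition~\ref{reduction}, but one would then have to relate the kernels $\kappa^k(A)$ and $\kappa^k(X)$, which is exactly what the retract argument above circumvents.
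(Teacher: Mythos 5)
Your proof is correct and is essentially the paper's own argument: the paper writes the same retract computation compactly as $\mathscr F = Q\jmath\,\mathscr F\,P\imath$ with $\jmath\mathscr F P\in\Ext^n(X,Y)=0$, which is exactly your $q_*j^*p^*i_*\mathscr E$ together with the identities $(pj)^*=\id$ and $(qi)_*=\id$. No differences worth noting beyond your more explicit bookkeeping of the functorial identities.
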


\begin{proof}
Let $\imath: A\To X$ and $\jmath: B\To Y$ be the inclusions and let $P:X\To A$ and $Q:Y\To B$ be the corresponding projections.
Every $\mathscr F\in \Ext^n(A, B)$ can be written as $Q\jmath\mathscr F P\imath$, with
$\jmath\mathscr F P\in \Ext^n(X, Y)$.
\hfill $_\square$
\end{proof}

Recall that the continuum hypothesis ({\sf CH}) is the statement $\aleph_1=\frak c$, while {\sf ZFC} is the usual setting of set theory, with the axiom of choice and {\sf MA} stands for Martin's axiom.

\begin{proposition}\label{prop:positive}$\;$
\begin{enumerate}
\item There exist $\mathscr L_\infty$ spaces $X$ for which $\Ext^2(X)\neq 0$.
\item Under {\sf CH}, $\Ext^2(X, c_0)\neq 0$ if $X$ is one of the spaces $c_0(\aleph_1), \ell_\infty, \ell_\infty/c_0$.
\end{enumerate}
\end{proposition}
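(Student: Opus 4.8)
The argument for (2) is organized around one reduction. Using the injective presentation $0\to c_0\to\ell_\infty\to\ell_\infty/c_0\to0$ together with (\ref{eq:reduction-inj}) (i.e. Proposition~\ref{reduction-inj}), for every Banach space $X$ one has $\Ext^2(X,c_0)=\Ext^1(X,\ell_\infty/c_0)$. Thus (2) is equivalent to exhibiting, under {\sf CH}, a nonzero element of $\Ext(X,\ell_\infty/c_0)$ for each $X\in\{c_0(\aleph_1),\ell_\infty,\ell_\infty/c_0\}$. Now $\ell_\infty/c_0$ is separably injective, so $\Ext(X,\ell_\infty/c_0)=0$ as soon as $\dens(X)<\aleph_1$; since under {\sf CH} each of the three spaces has density exactly $\aleph_1$, the statement says precisely that $\ell_\infty/c_0$ fails to be $\aleph_2$-injective, and that the failure is already visible on these spaces.

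Two of the three cases are then formal. Feeding $0\to c_0\to\ell_\infty\to\ell_\infty/c_0\to0$ into the contravariant homology sequence $\Ext^\bullet(-,c_0)$ and using that $c_0$ is separably injective, so that $\Ext^k(c_0,c_0)=0$ for all $k\ge1$ (Theorem~\ref{negative}(3)), the piece $\Ext^1(c_0,c_0)\to\Ext^2(\ell_\infty/c_0,c_0)\to\Ext^2(\ell_\infty,c_0)\to\Ext^2(c_0,c_0)$ degenerates into an isomorphism $\Ext^2(\ell_\infty/c_0,c_0)\cong\Ext^2(\ell_\infty,c_0)$; hence the $\ell_\infty$ and $\ell_\infty/c_0$ cases are equivalent. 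The $c_0(\aleph_1)$ case is tied to them through Lemma~\ref{lem:complement}: $c_0(\aleph_1)=c_0(\mathfrak{c})$ embeds in $\ell_\infty/c_0$ via an almost disjoint family, and if this copy is complemented a nonzero class transfers between the two spaces. (Checking that this $c_0(\aleph_1)$ is indeed complemented in $\ell_\infty/c_0$ — or else comparing via the homology sequence of $0\to c_0(\aleph_1)\to\ell_\infty/c_0\to(\ell_\infty/c_0)/c_0(\aleph_1)\to0$ — is a point to be handled carefully.) The genuine difficulty, and the only place {\sf CH} is used, is the single nonvanishing $\Ext(c_0(\aleph_1),\ell_\infty/c_0)\ne0$: I would read a class here as the obstruction to coherently lifting the $\aleph_1$ generators of $c_0(\aleph_1)$ through $\ell_\infty\to\ell_\infty/c_0$, and build a non-liftable system by a transfinite recursion of length $\aleph_1$ based on the combinatorics of $\mathcal P(\omega)/\mathrm{fin}$ (an almost disjoint family or a Hausdorff/Luzin gap); {\sf CH} is exactly what bounds the number of potential trivializations by $\aleph_1$, so that all of them can be defeated in $\aleph_1$ steps. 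This gap construction is the main obstacle.

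For (1) the obvious criterion is unavailable: \cite[Corollary~5.1]{ext2} needs $X$ to contain $\ell_2^n$ uniformly complemented, whereas no $\mathscr L_\infty$-space can — an $n$-dimensional subspace of an $\mathscr L_{\infty,\lambda}$-space lives $\lambda$-isomorphically inside some $\ell_\infty^k\subseteq\ell_\infty$, and the Kadec--Snobar value forces the relative projection constant of $\ell_2^n$ to grow like $\sqrt{n}$. Instead I would import the known $\Ext^2(\ell_2,\ell_2)\ne0$ \cite[Theorem~4.6]{ext2} through a Bourgain--Pisier $\mathscr L_\infty$-envelope $0\to\ell_2\to\mathcal L\to Q\to0$, with $\mathcal L$ an $\mathscr L_\infty$-space and $Q$ having the Schur and Radon--Nikod\'ym properties. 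Running $\Ext^\bullet(-,\ell_2)$ on this sequence, $\Ext^3(Q,\ell_2)=0$ would yield a surjection $\Ext^2(\mathcal L,\ell_2)\to\Ext^2(\ell_2,\ell_2)$, so $\Ext^2(\mathcal L,\ell_2)\ne0$; then the covariant sequence $\Ext(\mathcal L,Q)\to\Ext^2(\mathcal L,\ell_2)\to\Ext^2(\mathcal L,\mathcal L)$ would carry this into $\Ext^2(\mathcal L,\mathcal L)$ once the connecting map out of $\Ext(\mathcal L,Q)$ is seen to vanish. The real work is controlling these auxiliary groups $\Ext^\bullet(Q,-)$ and $\Ext^\bullet(\mathcal L,Q)$; here one exploits the Schur/Radon--Nikod\'ym structure of $Q$ and the fact that $\mathcal L^{**}$ is injective (which also rewrites the target as $\Ext^2(\mathcal L,\mathcal L)=\Ext(\mathcal L,\mathcal L^{**}/\mathcal L)$). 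This is where I expect the main obstacle to lie.
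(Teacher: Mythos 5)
Your proposal does not reach a proof of either part; both halves stop at exactly the steps that carry the mathematical content.

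For part (2), your reduction $\Ext^2(X,c_0)=\Ext(X,\ell_\infty/c_0)$ via the injective presentation $0\to c_0\to\ell_\infty\to\ell_\infty/c_0\to0$ is precisely the paper's route, but at that point the paper simply invokes \cite[Theorem 1]{accgm5}, which \emph{is} the statement that, under {\sf CH}, $\Ext(X,\ell_\infty/c_0)\neq0$ for $X=c_0(\aleph_1),\ \ell_\infty,\ \ell_\infty/c_0$. Everything you sketch afterwards is an attempt to reprove that theorem, and it is not carried out: the transfinite ``gap construction'' is announced as the main obstacle, not performed. Worse, the one concrete transfer mechanism you propose for the $c_0(\aleph_1)$ case --- locating a \emph{complemented} copy of $c_0(\aleph_1)$ inside $\ell_\infty/c_0$ and applying Lemma~\ref{lem:complement} --- cannot work: $\ell_\infty/c_0$ is a Grothendieck space (quotients of the Grothendieck space $\ell_\infty$ are Grothendieck), the Grothendieck property passes to complemented subspaces, and $c_0(\aleph_1)$ is not Grothendieck because it contains $c_0$ complemented; so no copy of $c_0(\aleph_1)$ in $\ell_\infty/c_0$, almost-disjoint or otherwise, is complemented. (Your observation that $\Ext^2(\ell_\infty,c_0)\cong\Ext^2(\ell_\infty/c_0,c_0)$, via the contravariant sequence and Sobczyk of order $n$, is correct, but it only shows two of the three cases are equivalent; it produces no nonvanishing.)

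For part (1), your plan rests on two claims for which you offer no argument and for which none of the paper's vanishing criteria applies: that $\Ext^3(Q,\ell_2)=0$ for the Bourgain--Pisier quotient $Q$ (note $Q$ is Schur with RNP but is \emph{not} an $\mathscr L_1$-space, so Theorem~\ref{negative}(4) is unavailable), and that the connecting map $\Ext(\mathcal L,Q)\to\Ext^2(\mathcal L,\ell_2)$ misses the class you produce. Each of these is a priori as hard as the problem you are trying to solve. The paper's proof is entirely different and elementary, needing no Hilbert-space input whatsoever: splice $0\to c_0\to\ell_\infty\to\ell_\infty/c_0\to0$ with an injective presentation $0\to\ell_\infty/c_0\to\ell_\infty(\mathfrak c)\to c\kappa^2(c_0)\to0$; since $\ell_\infty$ is injective, Lemma~\ref{lem:EF=0=>F=0}(a) (exactly as in Proposition~\ref{bour}) shows that if this 2-exact sequence were trivial then $\ell_\infty/c_0$ would be complemented in $\ell_\infty(\mathfrak c)$, hence injective, contradicting Amir's theorem; so it is a nonzero element of $\Ext^2(c\kappa^2(c_0),c_0)$. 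Both ends are $\mathscr L_\infty$-spaces (quotients of injectives by $\mathscr L_\infty$-subspaces are $\mathscr L_\infty$ by \cite[Proposition 5.2]{lindrosep}), so $X=c_0\oplus c\kappa^2(c_0)$ is an $\mathscr L_\infty$-space and Lemma~\ref{lem:complement} yields $\Ext^2(X)\neq0$. Incidentally, this same diagram is what makes part (2) a one-line consequence of the cited theorem: one replaces $c\kappa^2(c_0)$ by any $X$ with $\Ext(X,\ell_\infty/c_0)\neq0$.
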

\begin{proof}
Part (1) follows the idea of the proof of Proposition~\ref{bour}, using the preceding lemma: since
$$
\xymatrixrowsep{0.5pc}
\xymatrix{0\ar[r] &  c_0 \ar[r] &  \ell_\infty  \ar[rr] \ar[dr]  && \ell_\infty(\frak c)\ar[r] & {\underbrace{\ell_\infty(\frak c)\big/(\ell_\infty/c_0)}_{c\kappa^2(c_0)} } \ar[r]& 0 \\
& &&\ell_\infty/c_0 \ar[ur]
}
$$
is nonzero we can take $X=c_0\oplus \big(\ell_\infty(\frak c)\big/(\ell_\infty/c_0)\big)$ and the preceding lemma applies.

Part (2) follows from \cite[Theorem 1]{accgm5} where it has been shown that, under {\sf CH}, $\Ext(X,\ell_\infty/c_0)\neq 0$ for these choices of $X$. Therefore $X$ can replace $c\kappa^2(c_0)$ in the preceding diagram.
\hfill $_\square$
\end{proof}

The just proved result contains a difficult point inside: Is $\Ext^2(c_0(\aleph_1), c_0)=0$ in {\sf ZFC}? On one hand, $\Ext(c_0(\aleph_1), c_0)\neq 0$ in {\sf ZFC}
as it is witnessed by the well-known nontrivial exact sequence
$0\To c_0 \To C(\Delta_M)\To c_0(\aleph_1)\To 0$ in which
$C(\Delta_M)$ is the subspace of $\ell_\infty$ generated by $c_0$ and the characteristic functions of an almost disjoint family of size $\aleph_1$; see \cite[Example 2]{johnlind} or \cite[Section 2.2.4]{2132}.\\
On the other hand,  $\Ext^2(C(\Delta_M), c_0)=\Ext^2(c_0(\aleph_1), c_0)$ since
$\Ext(c_0)=\Ext^2(c_0)=0$. Finally, under [{\sf MA} + $\aleph_1<\mathfrak c$] one has $\Ext(C(\Delta_M), c_0)=0$ \cite[Corollary 5.3]{mp}, which opens the door to believe that also $\Ext^2(c_0(\aleph_1), c_0)=\Ext^2(C(\Delta_M), c_0)=0$ in this axiomatic.
\medskip

The situation for $\mathscr L_1$-spaces is completely different, as the following example of Wodzicki shows. The key point is that if
 $X$ is a separable $\mathscr L_1$-space not isomorphic to $\ell_1$, for instance  $X=L_1$, then $\kappa(X)$, which can assumed separable, is an $\mathscr L_1$-space not isomorphic to $\ell_1$. Actually $\kappa(X)$
is uncomplemented in its bidual: otherwise the projective presentation of $X$ would split (Theorem \ref{negative}(4)) forcing $X$ to be projective and thus isomorphic to $\ell_1$. Iterating the argument we obtain that the kernels $\kappa^n(X)$ are all $\mathscr L_1$-spaces not isomorphic to $\ell_1$ and that the sequences
$$
\xymatrixcolsep{2.00pc}
\xymatrixrowsep{0.5pc}
\xymatrix{
0 \ar[r] & \kappa^{n}X  \ar[r]   &  \ell_1 \ar[dr]   \ar[rr]  & & \cdots  \ar[dr] \ar[rr]  & & \ell_1 \ar[r]      &  X \ar[r] & 0\\
& & &\kappa^{n-1}X \ar[ur] && \kappa^{1}X \ar[ur] &
}
$$
are nonzero in $\Ext^n(X, \kappa^{n}X)$. So, $Y=X\oplus \kappa^{n}X$ is an $\mathscr L_1$-space for which $\Ext^n(Y)\neq 0$.

\medskip

We close this section with the following remark on $\Ext^3$. It is shown in \cite[Corollary  5.1]{ext2} that $\Ext^2(\ell_p)\neq 0$ for $1<p<\infty$ and it is a classical result in Banach space theory that $L_1$ contains isometric copies of $\ell_p$ for $1<p\leq 2$; see \cite[Theorem 6.4.17]{ak}.
These copies are uncomplemented, and so we have nontrivial sequences
$$
\xymatrix{
0\ar[r] & \ell_p\ar[r]^\imath &L_1\ar[r] & L_1/\imath[\ell_p] \ar[r] & 0 &(\mathscr F)
}
$$
Since $\Ext^2(L_1,\ell_p)=0$, by Theorem~\ref{negative}(4), taking any nonzero $\mathscr E\in \Ext^2(\ell_p)$ we have that $\mathscr{EF}$ is nonzero in $\Ext^3(L_1/\imath[\ell_p],\ell_p)$, by Lemma~\ref{lem:EF=0=>F=0}(b). Of course one also has $\Ext^2(L_1/\imath[\ell_p],\ell_p)\neq 0$: just consider $\mathscr{DF}$, with $\mathscr D$ nonzero in $\Ext(\ell_p)$.

\section{Homological dimension of Banach spaces}\label{sec:dim}

The study of the various homological dimensions of modules and algebras is a classical topic in the homology of Banach and topological algebras \cite[Chapter 7]{maclane}, \cite[III.6]{gelfman}, \cite[III.5]{helm}. In Banach spaces, however, the problem has only been considered, to the best of our knowledge, by Wodzicki \cite{wod}. Following \cite{wod}, we define
the  projective dimension $\pd(X)$ of a Banach space $X$ as the smallest $n$ for which $\Ext^{n+1}(X,\cdot)=0$ or, equivalently, the smallest $n$ so that $\kappa^n(X)$ is projective; analogously,
 the injective dimension ${\id}(X)$ is is the smallest $n$ for which $\Ext^{n+1}(\cdot, X)=0$ or $c\kappa^n(X)$ is injective.

 Wodzicki considers other variations such as the absolutely pure and pure injective dimensions and the \emph{flat dimension} of $X$, denoted $\fd(X)$, defined as the least integer $n$ for which there is an $n$-exact sequence
 $$
 \xymatrix{
 0 \ar[r] &\mathcal F_0  \ar[r] &\mathcal F_1\ar[r] &\dots \ar[r] &\mathcal F_n \ar[r] & X\ar[r] &0
 }
 $$
in which $\mathcal F_i$ are $\mathscr L_1$-spaces for all $0\leq i\leq n$.
This can be understood as a flat resolution of $X$ because the dual of an $\mathscr L_1$-space is already injective.

It is shown in \cite{wod} that $\pd(\mathcal B^*)=\id(\mathcal B^{**})$ (see Theorem~\ref{bour}) and also that $\pd(X)=\infty$ if $X$ is an $\mathscr L_1$-space not isomorphic to any $\ell_1(I)$; see the remarks closing the preceding section. This is essentially everything that is currently known about the behaviour of $\pd, \id, \fd$.
As remarked in \cite{wod}, it is expected these dimensions to be $\infty$ for most ``classical''
spaces, with the obvious exceptions.
However we do not have much evidence supporting this conjecture: actually we do not known how to construct large sequences with reflexive ends.
The obvious candidates to appear as ends are the following spaces, taken from \cite{jz}: Let $(G_n)_{n\geq 1}$ be  a sequence of finite dimensional spaces which is dense in the set of ``all finite dimensional spaces'' with respect to the Banach-Mazur distance in the sense that for every finite dimensional space $F$ and $\e>0$ there is some $n$ such that $d(F,G_n)<1+\e$. Define
$$
\mathcal C_ p=\begin{cases}
\ell_p(\mathbb N, G_n) &\text{for $1\leq p<\infty$},\\
c_0(\mathbb N, G_n) &\text{if $p=\infty$}.
\end{cases}
$$
These spaces test when a Banach space is an $\mathscr L_1$-space. Indeed
 $X$ is an $\mathscr L_1$-space if and only if $\Ext(X,\mathcal C_p)=0$ for some (equivalently, for every) $1 \leq  p < \infty$. If, besides, $X$ is separable, then $X$ is an $\mathscr L_1$-space if and only if $\Ext(X,\mathcal C_\infty)=0$.
A proof can be seen in \cite[Corollary 5.4]{castmoresob}.

The immediate consequence is that $\fd(X)$ is the least integer $n$ for which $0=\Ext^{n+1}(X,\mathcal C_p)$ for some (or any) $1\leq  p < \infty$. It is easy to believe that $\Ext^n(\mathcal C_p)\neq 0$ for all $n$, as it is the case for $n=1,2,3$ (we omit the proof). We have the following complement to \cite{wod} concerning Kadec space $\mathcal K$. This space, independently discovered by Kadec, Pe{\l}czy\'nski and Wojtaszczyk \cite{kade, pelcuni, p-w}, is separable, has the BAP, and it contains a complemented copy of each separable Banach space with the BAP.

\begin{proposition}
 $\Ext^n(\mathcal K)\neq 0$ for all $n$.
In particular $\pd\mathcal K= \id\mathcal K=\infty$.
\end{proposition}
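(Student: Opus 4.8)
The plan is to reduce the statement to a nonvanishing result for $\mathscr L_1$-spaces that is already in hand, and then transport it into $\mathcal K$ through its universal complementation property. Since $\mathcal K$ contains a complemented copy of every separable Banach space with the BAP, Lemma~\ref{lem:complement} (applied with $X=Y=\mathcal K$) shows that it is enough to produce, for each $n$, a single separable Banach space $Y_n$ \emph{with the BAP} such that $\Ext^n(Y_n)\neq 0$: such a $Y_n$ is complemented in $\mathcal K$, so $\Ext^n(\mathcal K)=0$ would force $\Ext^n(Y_n)=0$ by Lemma~\ref{lem:complement}, a contradiction.

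For the building block I would take exactly the space discussed just before the statement. Fix a separable $\mathscr L_1$-space $X$ not isomorphic to $\ell_1$, say $X=L_1$, and recall that its iterated kernels $\kappa^m(X)$ are again separable $\mathscr L_1$-spaces not isomorphic to $\ell_1$ (otherwise a projective presentation would split, against Theorem~\ref{negative}(4)), so that the spliced sequence $0\to\kappa^n(X)\to\ell_1\to\cdots\to\ell_1\to X\to 0$ is nonzero in $\Ext^n(X,\kappa^n(X))$. Putting $Y_n=X\oplus\kappa^n(X)$ and noting that both $X$ and $\kappa^n(X)$ are complemented summands of $Y_n$, Lemma~\ref{lem:complement} again yields $\Ext^n(Y_n)\neq 0$; this is precisely Wodzicki's observation recorded above.

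It remains to verify that each $Y_n$ embeds complementably in $\mathcal K$, i.e.\ that $Y_n$ is separable and has the BAP. Separability is immediate, since $L_1$ and all its kernels are separable. For the BAP I would invoke the classical fact that every $\mathscr L_1$-space has the bounded approximation property (equivalently, pass to the dual, which is an $\mathscr L_\infty$-space, and use that the BAP of $X^*$ descends to $X$); as $X$ and each $\kappa^m(X)$ are $\mathscr L_1$-spaces, so is the finite direct sum $Y_n$, whence $Y_n$ has the BAP. Therefore $Y_n$ is complemented in $\mathcal K$, and the first paragraph gives $\Ext^n(\mathcal K)\neq 0$. Since $n$ is arbitrary, $\pd(\mathcal K)=\id(\mathcal K)=\infty$ follows directly from the definitions of the two dimensions.

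The only genuinely delicate point is this approximation-theoretic input: the BAP of the spaces $Y_n$, equivalently of the kernels $\kappa^m(X)$. Once it is granted, the universal property of $\mathcal K$ together with Lemma~\ref{lem:complement} does everything formal, and in particular lets us sidestep having to control $\Ext^n$ of a reflexive space directly, which the paper flags as presently out of reach. I therefore expect the verification that $Y_n$ really sits complementably inside $\mathcal K$ (rather than any Ext computation) to be the main obstacle.
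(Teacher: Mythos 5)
Your proposal is correct and is essentially the paper's own argument: the paper also takes Wodzicki's nonvanishing classes in $\Ext^n(L_1,\kappa^n L_1)$, notes that $L_1$ and $\kappa^n L_1$ have the BAP (being $\mathscr L_1$-spaces) and hence sit complemented in $\mathcal K$, and concludes via Lemma~\ref{lem:complement}. The only difference is cosmetic: the paper applies the lemma once, asymmetrically, with $A=L_1$ and $B=\kappa^n L_1$ complemented in $X=Y=\mathcal K$, whereas you first package $Y_n=L_1\oplus\kappa^n(L_1)$ (as in the paper's remark on Wodzicki's example) and then apply the lemma a second time; your worry about the BAP of the kernels is already settled by the fact, used in the paper too, that every $\mathscr L_1$-space has the BAP.
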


\begin{proof}
As $\pd L_1=\infty$ for each $n\geq 1$ one has $\Ext^n(L_1,\kappa^n L_1)\neq 0$. Both $L_1$ and $\kappa^n L_1$ have the BAP (they are $\mathscr L_1$-spaces), they embed as complemented subspaces of $\mathcal K$ and so $\Ext^n(\mathcal K)\neq 0$, by Lemma~\ref{lem:complement}.
\hfill $_\square$
\end{proof}

Actually one can prove that if $X$ and $Y$ are separable Banach spaces, not necessarily having the BAP, such that $\Ext^n(X,Y)\neq 0$, then  $\Ext^n(X,\mathcal K)\neq 0$  and $\Ext^n(\mathcal K,Y)\neq 0$.
The following problem may be very hard, as only $\pd(\ell_2), \id(\ell_2)\geq 3$ is currently known:
\begin{prob}
 Compute the projective (or flat) and injective dimensions of the separable Hilbert space.
\end{prob}
\section{Banach vs. Quasi Banach spaces}\label{sec:Q}

Every Banach space is also a quasi Banach space and, therefore, each exact sequence of Banach spaces can be regarded as an exact sequence of quasi Banach spaces. (General references for quasi Banach spaces are the monograph \cite{KPR} and \cite{k-handbook}.)
It is then  natural to consider the interaction between the category {\bf Q} of quasi Banach spaces and its subcategory {\bf B} of Banach spaces, so let us add some remarks on this issue.
All the definitions and results in Section~\ref{sec:def-extn} and those in the appendix work in {\bf Q} exactly as in {\bf B}. In contrast, with the obvious exception of Lemma~\ref{lem:complement}, none of the results in Sections~\ref{sec:def-extn}, \ref{sec:red} and \ref{sec:dim} would survive in {\bf Q} since this category has no injective objects apart form 0 (this follows from \cite[Proof of Proposition 3.45]{2132}) and the only projective spaces are the finite dimensional ones: Indeed,
let $X$ be  a quasi Banach space. Then, by the  Aoki-Rolewicz Theorem (see \cite[Theorem 1.3]{KPR}) there is an index set $I$ and a quotient map $Q_p:\ell_p(I)\To X$ for suitable $0<p\leq 1$ and so for each $0<q<p$. If $X$ were projective in {\bf Q}, it would be isomorphic to a complemented subspace of $\ell_p(I)$ and to a complemented subspace of $\ell_q(I)$. It follows from a result of Stiles \cite[Theorem 2]{stiles} that $X$ is finite dimensional.

That said, let us write $\Ext^n_{\bf Q}$ to indicate exact sequences of quasi Banach spaces and $\Ext^n_{\bf B}$ when referring to the category of Banach spaces.
In spite of the fact that ${\bf Q}$ does not have enough injectives or projectives, it follows from the results in \cite{smirnov} that $\Ext^n_{\bf Q}(X,Y)$ are \emph{sets} when $X$ and $Y$ are quasi Banach spaces.

 The core problem is that it is perfectly possible to have two Banach spaces $X,Y$
and a short exact sequence
$$
\xymatrix{(\mathscr Z)&
0 \ar[r] & Y \ar[r]& Z \ar[r]  & X\ar[r] & 0 & &
}
$$
in which $Z$ is a quasi Banach space not isomorphic to a Banach space. That is,
$\Ext_{\bf Q}(X, Y)$ can be strictly larger than $\Ext_{\bf B}(X, Y)$. Perhaps the most extreme counterexample is obtained when $Y=\K$ is the ground field (which is injective in {\bf B} by the Hahn-Banach theorem) and $X=\ell_1$ (which is projective in {\bf B}), so in particular $\Ext_{\bf B}(\ell_1, \K)=0$.  However, Ribe \cite{ribe}, Kalton \cite{kalt} and Roberts \cite{robe}, independently and almost simultaneously around 1980, and Smirnov and Sheikhman \cite{smirsheik} around 1990, constructed examples of nontrivial elements of $\Ext_{\bf Q}(\ell_1, \K)$. Ribe's counterexample  is the simplest of the four and can be seen also in \cite[Chapter 5, \S~4]{KPR} and \cite[Section~4]{k-handbook}

There are also couples of Banach spaces for which $\Ext_{\bf Q}(X, Y)=\Ext_{\bf B}(X, Y)$, that is, any quasi Banach space $Z$ fitting in a short exact sequence as $(\mathscr Z)$ is necessarily (isomorphic to) a Banach space. Actually this depends only on the quotient space $X$. Indeed, if we agree to say that a quasi Banach space $X$ is a $K$-space when $\Ext_{\bf Q}(X, \K)=0$ then a classical result of Dierolf \cite{dier} shows that a Banach space $X$ is a $K$-space if and only if $\Ext_{\bf Q}(X,Y)=\Ext_{\bf B}(X,Y)$ for all Banach spaces $Y$.

While $\ell_1$ fails to be a $K$-space, other important families of Banach spaces are $K$-spaces, among them $B$-convex spaces \cite[Theorem 5.18]{KPR} as well as $\mathscr L_\infty$-spaces and their quotients \cite[Theorem 6.5]{kaltrobe}. This has the following consequence, where
$C[0,1]/\ell_1$ denotes any quotient of $C[0,1]$ by a subspace isomorphic to $\ell_1$.

\begin{proposition}\label{ejemplo} $\Ext_{\mathbf Q}^2\big(C[0,1]/\ell_1, \K\big)\neq 0$, while $\Ext_{\mathbf B}^2\big(C[0,1]/\ell_1, \K\big)=0$.
\end{proposition}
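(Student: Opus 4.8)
The plan is to handle the two halves independently: the Banach equality is essentially free, while the quasi Banach nonvanishing is produced by splicing a Ribe-type twisting of the scalars with the universal quotient sequence of $C[0,1]$ and then detecting nontriviality through a $K$-space hypothesis at the middle level. For the Banach assertion, recall that $\K$ is injective in $\mathbf B$ by the Hahn--Banach theorem, so $\Ext_{\mathbf B}(\cdot,\K)=0$; by Corollary~\ref{n-cor}(b) this propagates to $\Ext_{\mathbf B}^m(\cdot,\K)=0$ for every $m\geq 1$, and in particular $\Ext_{\mathbf B}^2\big(C[0,1]/\ell_1,\K\big)=0$. No further work is needed here.

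For the quasi Banach assertion, fix an isomorphic embedding $\ell_1\To C[0,1]$ (available because $C[0,1]$ is separably universal) and let $\mathscr F$ be the resulting short exact sequence $0\To\ell_1\To C[0,1]\To C[0,1]/\ell_1\To 0$, regarded as an element of $\Ext_{\mathbf Q}(C[0,1]/\ell_1,\ell_1)$. By Ribe's theorem \cite{ribe} there is a nontrivial class $\mathscr R\in\Ext_{\mathbf Q}(\ell_1,\K)$, realized by a sequence $0\To\K\To Z\To\ell_1\To 0$ of quasi Banach spaces with $Z$ not isomorphic to a Banach space. Splicing through $\ell_1$ yields a class $\mathscr R\mathscr F\in\Ext_{\mathbf Q}^2\big(C[0,1]/\ell_1,\K\big)$, and the task is to show that it does not vanish.

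The nonvanishing is extracted from Lemma~\ref{lem:EF=0=>F=0}(b), which is legitimate in $\mathbf Q$ because, as noted at the beginning of this section, every definition and result of Section~\ref{sec:def-extn} transfers verbatim from $\mathbf B$ to $\mathbf Q$. I apply it with $Z=\ell_1$, $F=C[0,1]$, $X=C[0,1]/\ell_1$, $Y=\K$ and $n=1$: the lemma's standing hypothesis $\Ext_{\mathbf Q}^1(C[0,1],\K)=0$ holds because $C[0,1]$ is an $\mathscr L_\infty$-space and $\mathscr L_\infty$-spaces are $K$-spaces \cite[Theorem 6.5]{kaltrobe}. The lemma then asserts that $\mathscr R\mathscr F\sim 0$ would force $\mathscr R\sim 0$; since $\mathscr R$ was chosen nontrivial, $\mathscr R\mathscr F\not\sim 0$, whence $\Ext_{\mathbf Q}^2\big(C[0,1]/\ell_1,\K\big)\neq 0$. (It is worth stressing the sharpness: $C[0,1]/\ell_1$ is itself a quotient of an $\mathscr L_\infty$-space, hence a $K$-space, so even $\Ext_{\mathbf Q}^1(C[0,1]/\ell_1,\K)=0$, and the nontriviality only appears at length $2$.) Given the cited inputs, the argument is purely formal; the genuine mathematical content, and the step that would be the real obstacle if one had to prove the statement from scratch, is precisely the fact that $\K$ cannot be nontrivially twisted around an $\mathscr L_\infty$ middle space, i.e. the $K$-space property of $C[0,1]$, which is exactly the hypothesis that lets the splice register the Ribe class.
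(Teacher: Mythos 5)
Your proof is correct and follows essentially the same route as the paper: both splice Ribe's nontrivial element of $\Ext_{\mathbf Q}(\ell_1,\K)$ onto the sequence $0\to\ell_1\to C[0,1]\to C[0,1]/\ell_1\to 0$ and detect nontriviality of the splice via the Kalton--Roberts theorem that $\Ext_{\mathbf Q}(C[0,1],\K)=0$. The only cosmetic difference is that you invoke Lemma~\ref{lem:EF=0=>F=0}(b) while the paper writes out the relevant segment of the contravariant homology sequence (\ref{eq:conLES}) directly, which is precisely how that lemma is proved, so the two arguments coincide.
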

\begin{proof} The ``while''   part is clear since $\K$ is injective as a  Banach space.
To see the first part we apply the contravariant sequence (\ref{eq:conLES}) to
$$
\xymatrixcolsep{3pc}
\xymatrix{
(\mathscr C) & 0 \ar[r]  & \ell_1 \ar[r]^-\imath & C[0,1] \ar[r]^-\pi & C[0,1]/\imath[\ell_1] \ar[r] & 0 &
}
$$
with $B=\K$ and we look at
$$\xymatrixcolsep{1.5pc}
\xymatrix{
\dots \ar[r]  & \Ext_{\bf Q}(C[0,1], \K)  \ar[r]^-0 &
\Ext_{\bf Q}(\ell_1, \K)  \ar[rr]_-{\text{injective}}^-{\mathscr C^*} & & \Ext_{\bf Q}^2(C[0,1]/\imath[\ell_1], \K) \ar[r] &\dots
}
$$
The space $\Ext_{\bf Q}(C[0,1], \K)$ is zero, by the Kalton-Roberts theorem already mentioned; thus if we splice a nontrivial sequence in $\Ext_{\bf Q}(\ell_1, \K) $, for instance Ribe's
$$
\xymatrixcolsep{3.7pc}
\xymatrix{
(\mathscr R) & 0 \ar[r]  & \K\ar[r]& \mathcal R \ar[r] & \ell_1 \ar[r] & 0 &
}
$$
to $(\mathscr C)$ we get a nontrivial 2-exact sequence
$$
\xymatrixrowsep{0.5pc}\xymatrixcolsep{2.9pc}
\xymatrix{0\ar[r] &  \K \ar[r] & \mathcal R  \ar[rr] \ar[dr]  && C[0,1] \ar[r] & C[0,1]/\imath[\ell_1] \ar[r]& 0 \\
& &&\ell_1\ar[ur]
}
$$
\hfill $_\square$
\end{proof}

\section{Appendix. The homology sequences}

\subsection{Pullback and pushout}
Given operators $\alpha:Y\To A$ and $\beta:Y\To B$ acting between Banach spaces, the associated pushout diagram is
\begin{equation}\label{po-dia1}
\begin{CD}
Y@>\alpha>> A\\
@V \beta VV @VV \overline \beta V\\
B @>> \overline \alpha > \PO
\end{CD}
\end{equation}
The pushout space $\PO=\PO(\alpha,\beta)$ in the quotient of the direct sum
$A\oplus_1 B$ by the closure of the subspace $\Delta=\{(\alpha y,-\beta y): y\in Y\}$.
The map $\overline \alpha$ is the composition of the inclusion of $B$ into $A\oplus_1 B$ and the natural
quotient map $A\oplus_1 B\to (A\oplus_1 B)/\overline\Delta$, so that
$\overline \alpha(b)=(0,b)+\overline\Delta$ and, analogously, $\overline \beta(a)=(a,0)+\overline\Delta$. All this make (\ref{po-dia1}) a commutative diagram: $\overline \beta\alpha=\overline \alpha\beta$. The pushout square (\ref{po-dia1}) has the following universal property: if $\beta':A\To C$ and  $\alpha':B\To C$ are operators such that $\beta'\alpha=\alpha'\beta$, there is a unique operator $\gamma:\PO\To C$ such that $\beta'=\gamma \overline \beta,  \alpha'=\gamma \overline \alpha$.

The pullback construction is the dual of that of pushout in the sense of categories,
that is, ``reversing arrows''.
Given operators $\alpha:A\To X$ and $\beta:B\To X$, the associated pullback diagram is
\begin{equation}\label{pb-dia}
\begin{CD}
B @> \beta >> X\\
@A{\underline \alpha}AA @AA \alpha A\\
\PB@>>\underline \beta > A
\end{CD}
\end{equation}
The pullback space is $\PB=\PB(\alpha,\beta)=\{(b,a)\in B\oplus_\infty A: \beta (b)=\alpha(a) \}$.
The underlined arrows are the restriction of the projections onto the corresponding factor. The pullback square has the following universal property: if $\alpha':C\To B, \beta':C\To A$ are operators such that $\beta\alpha'=\alpha\beta'$, then there exists a unique operator $\gamma: C\To \PB$ satisfying $ {\underline \alpha} \gamma=\alpha'$ and
$ {\underline \beta} \gamma=\beta'$.
Given an exact sequence
$$
\xymatrixcolsep{2.7pc}
\xymatrix{(\mathscr F)&0\ar[r] & Y\ar[r]^{f_0} &F_1\ar[r]^{f_1} &\cdots\ar[r]  &F_n \ar[r]^{f_n}  &X\ar[r] & 0
}
$$
and an  operator $\beta: Y\to B$, the pushout sequence $ \beta\mathscr F$ is the lower sequence in the diagram
$$
\xymatrixcolsep{2.15pc}
\xymatrix{(\mathscr F) & 0\ar[r] & Y\ar[r]^{f_0}\ar[d]_\beta &F_1\ar[r]^{f_1}\ar[d]^{\overline{\beta}} &F_2\ar[r]^{f_2} \ar@{=}[d]  &\cdots\ar[r]  &F_n \ar[r]^{f_n} \ar@{=}[d]   &X\ar[r] \ar@{=}[d]  & 0 \\
(\beta\mathscr F)& 0\ar[r] & B\ar[r]^{\overline{f_0}} &\PO \ar[r]^{\overline{f_1}} &F_2\ar[r]^{f_2}  &\cdots\ar[r]  &F_n \ar[r]^{f_n}  &X\ar[r] & 0
}
$$
Here, the left square is the pushout of the operators $f_0$ and $\beta$, while $\overline{f_1}$ is obtained from $f_1$ and the null map $0:B\To F_2$ and the universal property of $\PO$.

Dually, if $\alpha: A\To X$ is an operator, the pullback of $\mathscr F$ and $\alpha$ is the lower sequence in the commutative diagram
$$
\xymatrixcolsep{2.1pc}
\xymatrix{  0\ar[r] & Y\ar[r]^{f_0}\ar@{=}[d] &F_1 \ar@{=}[d]\ar[r]&\dots\ar[r]  &F_{n-1}\ar[r]^{f_{n-1}} \ar@{=}[d]   &F_n \ar[r]^{f_n}    &X\ar[r]  & 0 &(\mathscr F) \\
0\ar[r] & Y\ar[r]^{f_0} &F_1 \ar[r]&\dots\ar[r]  &F_{n-1}\ar[r]^{\underline{f_{n-1}}}  &\PB \ar[r]^{\underline{f_n}} \ar[u]^{\underline{\alpha}}  &A\ar[r]\ar[u]^\alpha   & 0 &(\mathscr F\!\alpha)
}
$$
The right square is the pullback of the operators $f_n$ and $\alpha$ and $\underline{f_{n-1}}$ is obtained from ${f_{n-1}}$ and $0:F_{n-1}\To A$ by the universal property of $\PB$.

It is clear that if $\mathscr E\sim \mathscr F$, then $\beta \mathscr E\sim \beta \mathscr F$ and $\mathscr E\alpha \sim \mathscr F\alpha$.

\subsection{Two long sequences}
The long homology sequences (also known as the ``Hom-Ext sequences'') connect spaces of operators and the successive $\Ext^n$. The following description suffices to understand everything in this paper. We begin with the covariant case.
Let
$$
\xymatrix{(\mathscr Z)&
0 \ar[r] & Y \ar[r]^\imath & Z \ar[r]^\pi & X\ar[r] & 0 & &
}
$$
be a short exact sequence
 and let $A$ be another Banach space. Then the following sequence is exact:
\begin{equation}\label{eq:covLES}
\xymatrixcolsep{2.0pc}
\xymatrixrowsep{0.8pc}
\xymatrix{
0 \ar[r]&\frak L(A, Y) \ar[r]^{\imath_*} & \frak L(A,Z) \ar[r]^{\pi_*} & \frak L(A,X)\\
\ar[r]^-{\mathscr Z_*}& \Ext(A,Y) \ar[r]^{\imath_*}  &\Ext(A, Z) \ar[r]^{\pi_*} &\Ext(A,X) \\
\cdots\ar[r]^-{\mathscr Z_*}& \Ext^n(A,Y) \ar[r]^{\imath_*}  &\Ext^n(A, Z) \ar[r]^{\pi_*} &\Ext^n(A,X) \\
\ar[r]^-{\mathscr Z_*}& \Ext^{n+1}(A,Y)  \ar[r]^{\imath_*}  &\Ext^{n+1}(A, Z) \ar[r]^{\pi_*} &\Ext^{n+1}(A,X)\ar[r]&\dots \\
}
\end{equation}
We apologize for the plethora of labels. Let us explain the meaning of the arrows. The first ocurrence of $\imath_*$ and $\pi_*$ is simple composition on the left: if $a\in\frak L(A,Y)$, then $\imath_*(a)= \imath a$ and the same applies to $\pi_*$. The first $\mathscr Z_*$ takes an operator $a:A\To X$ into the pullback $\mathscr Z a$.
The remaining $\imath_*$ and $\pi_*$ act taking pushouts: if $\mathscr E\in\Ext^n(A,Y)$, then $\imath_*(\mathscr E)=\imath \mathscr E$ is the lower sequence in the pushout diagram
$$\xymatrixcolsep{2.7pc}
\xymatrix{
0\ar[r] & Y\ar[r]\ar[d]^{{\imath}} & E_n\ar[r] \ar[d]^{\overline{\imath}}  & E_{n-1}\ar[r] \ar@{=}[d]  & \dots \ar[r] &E_1\ar[r]\ar@{=}[d] & A\ar[r] \ar@{=}[d] &0\\
0\ar[r] & Z\ar[r] & \PO \ar[r]  & E_{n-1}\ar[r]  & \dots \ar[r] &E_1\ar[r] & A\ar[r] &0
}
$$
The same applies to $\pi_*$. The remaining $\mathscr Z_*$ act by splicing through $X$: if  $\mathscr F\in\Ext^n(A,X)$, then $\mathscr Z_*(\mathscr F)= \mathscr Z \mathscr F$, as in the diagram
$$
\xymatrixrowsep{0.5pc}\xymatrixcolsep{2.4pc}
\xymatrix{
0\ar[r] & Y\ar[r]  & Z \ar[rd]\ar[rr]  &  & F_{1} \ar[r]  &\dots \ar[r] &F_n \ar[r] & A \ar[r] & 0\\
&& & X \ar[ur]
}
$$
This concludes the description of (\ref{eq:covLES}). For a proof of its linearity and exactness, see, for instance \cite[Theorem 6.42]{fresi}---or \cite[VII. Theorem 5.1]{mitc} if you want to learn the original proof by Schanuel.

We pass to describe, even more succinctly, the contravariant sequence. We consider again $(\mathscr Z)$ and a new ``target'' space $B$. Then the following sequence is exact
\begin{equation}\label{eq:conLES}
\xymatrixcolsep{2.5pc}
\xymatrixrowsep{0.8pc}
\xymatrix{
0 \ar[r]&\frak L(X,B) \ar[r]^{\pi^*} & \frak L(Z,B) \ar[r]^{\imath^*} & \frak L(Y, B)\\
\ar[r]^-{\mathscr Z^*}& \Ext(X,B) \ar[r]^{\pi^*}  &\dots \ar[r]^{\imath^*} &\Ext^{n-1}(X,B) \\
\ar[r]^-{\mathscr Z^*}& \Ext^n(X,B) \ar[r]^{\pi^*}  &\Ext^n(Z,B) \ar[r]^{\imath^*} &\Ext^n(X,B)\ar[r]^-{\mathscr Z^*}&\dots
}
\end{equation}
The meaning of the arrows should be obvious: the first occurrences of $\pi^*$ and $\imath^*$ act by composition on the right; all other by forming pullbacks. As for the arrows labelled as $\mathscr Z^*$ the first one acts forming pushouts and the remaining ones by splicing. The exactness of the sequence is proved in \cite[Theorem 6.43]{fresi}.


\end{document}